\newcommand{\p}{\partial}
\newcommand{\R}{\mathbb{R}}
\newtheorem{theorem}{Theorem}[section]
\newtheorem{lemma}{Lemma}[section]
\newtheorem{prop}{Proposition}[section]
\newtheorem{definition}{Definition}[section]
\newtheorem{remark}{Remark}[section]
\newcommand*\diff{\mathop{}\!\mathrm{d}}
\newtheorem*{remark*}{Remark}
\title{Phase separation and morphology formation in interacting ternary
mixtures under evaporation: Well-posedness and numerical
simulation of a non-local evolution system}
\author[$\ddagger$]{Rainey Lyons\footnote{rainey.lyons@kau.se}}
\author[$\dagger$]{Emilio N. M. Cirillo} 
\author[$\ddagger$]{Adrian Muntean}
\affil[$\ddagger$]{Department of Mathematics and Computer Science, Karlstad University, Sweden
 }
\affil[$\dagger$]{Department of Basic and Applied Sciences for Engineering (SBAI), Sapienza University of Rome, Italy}
\begin{document}

\maketitle
\begin{abstract}
We study a nonlinear coupled parabolic system with non-local drift terms modeling at the continuum level the inter-species interaction within a ternary mixture that allows the evaporation of one of the species. In the absence of evaporation, the proposed system coincides with the hydrodynamic limit of a stochastic interacting particle system of Blume--Capel--type driven by the Kawasaki dynamics. Similar governing dynamics are found in models used to study morphology formation in the design of organic solar cells, thin adhesive bands, and other applications.  
We investigate the well-posedness of the target system and present preliminary numerical simulations which incorporate `from the top' evaporation into the model. We employ a finite volumes scheme to construct approximations of the weak solution and illustrate how the evaporation process can affect the shape and connectivity of the evolving-in-time morphologies.

\vskip0.25cm
{\bf Keywords:} Phase separation, coupled non-local parabolic system, ternary mixture, evaporation,  well-posedness, weak solutions, numerical simulation
\vskip 0.25cm
{\bf AMS Subject classifications:} 	 35K55, 35Q70, 65N08  

\end{abstract}

\section{Introduction}
In this paper, we study the solvability and the numerical simulation of a nonlinear coupled parabolic system with a specific shape of the non-linear non-local drift, which is derived in \cite{Marra} as hydrodynamic limit of the Blume-Capel model endowed with Kawasaki dynamics describing the interaction of ternary mixture of particles. Such mixtures typically  consist of two different solutes mixed within a solvent and allow for phase separation. The situation is rather typical in modern materials science when thin films are involved; see e.g., Section \ref{applications} for a discussion of the application of such scenario to producing morphologies as needed for organic solar cells. In the applications discussed below, the solvent particle evaporates and plays a crucial role in the morphologies produced by the separated phases. The presence of such a non--equilibrium process makes this scenario different compared to the more classical phase separation settings arising in crystal growth, metallurgy, and so on, normally treated by Allen--Cahn--type  or Cahn--Hilliard--type equations. Specifically, by controlling the evaporation mechanism a local equilibrium is frozen, i.e., one has the possibility to actively select specific morphologies as the end configuration. We refer the reader to the recent works \cite{Andrea_EPJ,Andrea_PhysRevE, Mario}, where this setting has been explored using Monte Carlo simulations for suitable lattice--based models. Although we are driven by these two concrete applications, our work can be seen as a continuation of the efforts started earlier by Lebowitz and Giacomin \cite{Giacomin,giacomin1998phase}. 

For a good grip on the design of morphologies out of {\em ad hoc} polymer-polymer-solvent interactions, it is crucial to have sufficiently rich models posed at the continuum level that inherit, from the interacting particle systems, the ability to produce physically meaningful phase separation.  This would set the stage for process and shape optimization operations to reach optimal macroscopic transport and reaction properties, mediated by optimal morphology shapes. However, in spite of huge efforts in the statistical mechanics and applied mathematics communities, rigorous derivations of the corresponding hydrodynamic limit equations for mixtures of interacting particle systems with evaporation are currently out of reach. The main technical difficulties in performing rigorously the hydrodynamic limit process seem to arise because of the evaporation component of the process. To allow for the evaporation of the solvent, we modify the limit system with a linear evaporation term to mimic the release of the solvent. 
The problem setting is relevant, with a rich phenomenology, 
for the presence of three coexisting phases undergoing 
the separation process.

\subsection{A model without evaporation} 

Within this framework, the main interest is on the solvability of  the following system of two coupled non-local nonlinear parabolic equations 
\begin{equation}\label{Eq:MainModel_withoutEvap}
\left\lbrace
\begin{split}
    \p_t m &= \nabla \cdot \left[\nabla m - 2 \beta (\phi -m^2 ) (\nabla J * m) \right], \quad (t,x) \in (0,T)\times\Omega =: Q_T,\\
    \p_t \phi &= \nabla \cdot \left[ \nabla \phi - 2 \beta m (1 - \phi) (\nabla J * m) \right].
\end{split}\right. 
\end{equation}
Here, $t$ and $x$ represent the time and space variable, respectively, $m$ represents the average spin density (also called magnetization), and $\phi$ represents the solute volume concentration (we expand upon the meaning of these terms in a later section). Additionally, $\Omega \, \subset \R^d $ is a cube with spatially periodic boundary conditions (i.e., homeomorphic to the unit torus), $T$ is some positive time, $\beta >0$ is a constant representing the inverse temperature,
$J\in C_+^2(\bar \Omega)$ is a symmetric compactly supported potential such that $\int_{\Omega} J(r)dr=1$. We note that in the analysis to follow, such regularity on $J$ is unnecessary. Indeed, the proofs below readily hold for $J \in W^{1,\infty}(\R^d)$ and small adjustments can be made for to include $J \in W^{1,2}(\R^d)$ and possibly $J\in W^{1,1}(\R^d)$; however, we maintain the $C^2$ regularity to stay true to the Kac interpretation of the potential. For the system  \eqref{Eq:MainModel} we prescribe the initial data
\begin{equation}
m(t=0)=m_0  \mbox{ and } \phi(t=0)=\phi_0 \mbox{ in } \bar\Omega.
\end{equation} Additionally, we impose the following physically motivated assumption which we discuss in the next section,
\begin{equation}\label{Assump:MPhi}
    0 \leq |m_0| \leq \phi_0 \leq 1 \text{ almost everywhere in } \bar{\Omega}.
\end{equation}

 The structure of system \eqref{Eq:MainModel} (posed for $\Omega = \R^d$) is derived rigorously in \cite{Marra} as the hydrodynamic limit of the Kawasaki dynamics for the Blume--Capel model with range of 
interaction $\gamma^{-1}$, magnetic field $h_1$, and chemical potential
$h_2$,
whose Hamiltonian 
in a finite square $V\subset\mathbb{Z}^d$ 
is 
\begin{equation}\label{Eq_HamMarra}
H_\gamma (\sigma) 
= 
\frac{1}{2} \sum_{\substack{x \neq x' \in V}} 
J_\gamma (x-x')[\sigma(x)-\sigma(x')]^2 
-\sum_{x\in V}h_1 \sigma(x) 
-\sum_{x\in V}h_2 \sigma^2(x), 
\end{equation}
where $J_\gamma: \mathbb{R}^d\to \mathbb{R}$ 
is a Kac potential function, i.e.,
\begin{equation}
\label{geigamma}
J_\gamma(r)=\gamma^d J(\gamma r)
\end{equation}
for all $r\in \mathbb{R}^d$ and $\sigma(x) \in \{-1,0,+1\}$ is the spin of a particle at position $x \in V$. We discuss the intuition on the discrete spin model below, but also refer the reader to the monograph \cite{Presutti} for more information on the context.

As discussed in \cite{Marra}, letting $\vec{u} := (m,\phi)$, this system can also be written as a gradient flow structure given by 
\begin{equation}\label{Eq:GradStruct}
    \p_t \Vec{u} = \nabla \cdot \left( M\nabla \frac{\delta \mathcal{F}}{\delta \vec{u}}\right),
\end{equation}
where the mobility, \[M := \beta (1-\phi) \begin{bmatrix}
    \phi + \frac{\phi^2 - m^2}{1-\phi} & m \\
    m & \phi
\end{bmatrix}\]
and the free energy functional, $\mathcal{F}$, is given by
\[\mathcal{F}(\Vec{u}) = \int_\Omega f(\vec{u}) \diff{x} + \frac{1}{2} \int_\Omega \int_\Omega J(x-x') [m(x) - m(x')]^2 \diff{x'} \diff{x}, \]
where 
\[f(\Vec{u}) := \phi - m^2 + \beta^{-1} \left[ \frac{1}{2}(\phi+m)\log(\phi+m) + \frac{1}{2}(\phi-m)\log(\phi-m) + (1-\phi) \log(1-\phi) - \phi \log(2)\right].\]
Also discussed in \cite{Marra} is that $\mathcal{F}$ is a Lyapunov functional for \eqref{Eq:GradStruct} and that 
\[\frac{\diff{}}{\diff{t}} \mathcal{F} = - \sum_{i = 1}^{d} \sum_{\alpha, \beta = 1}^2 \int_\Omega \p_i \frac{\delta \mathcal{F}}{\delta \vec{u}_\alpha} \, M_{\alpha,\beta} \, \p_i \frac{\delta \mathcal{F}}{\delta \vec{u}_\beta}. \]
Unfortunately, such a variational structure is lost when one makes adjustments to include evaporation.

The first equation in system \eqref{Eq:MainModel} is similar to the non-local Cahn-Hilliard equation well studied in the literature \cite{giacomin1998phase,gal2017nonlocal}. In fact, it is easily seen for $\phi \equiv 1$ (i.e., a two species system), the model presented here reduces to the non-local Cahn-Hilliard equation with degenerate mobility and logarithmic free energy. However, the coupling of the non-linearity to the second equation, allows for a mechanism to track the solvent ratio in the mixture. Additionally, there is hidden structure in the equations (which we see later in system \eqref{Eq:SumDiff_System}) that is reminiscent of equations in various applications including the McKean--Vlasov equations studied in swarms \cite{carrillo2014derivation} and the Fokker--Planck equation studied in mean field games \cite{lasry2007mean} and opinion dynamics \cite{CHAZELLE2017365,carrillo2020long}. The main difference between these examples and the equations in system \eqref{Eq:SumDiff_System}, is the non-linearity present in the drift term. While this structure is not surprising to find, as the previously mentioned McKean--Vlasov equations have been used to study interacting particle systems in different applications (e.g., \cite{liu2021long}), this particular form of the non--linearity in the drift appears to be quite novel.

\subsection{Motivating applications and microscopic interpretation}\label{applications}
Phase separation models, such as the Cahn-Hilliard equation, have seen a wide variety of applications (see, e.g., \cite{Miranville}). The choice of the particular model \eqref{Eq:MainModel} is motivated by several applications such as the formation of rubber--based zones related to the design of thin adhesive bands \cite{creton2016rubber,Muller_etal2022}; or the formation of morphologies in organic solar cells \cite{hoppe2004organic}. In both of these applications, a mixture consisting of three species of particles (two solutes and one solvent) undergoes phase separation to form the aforementioned morphologies/zones. In both applications, evaporation of the solvent plays a key role as the ratio of solvent to the mixture has been shown to affect the resulting structure of the morphologies (see, e.g.,\cite{Andrea_PhysRevE} for a discrete simulation and \cite{LyonsMunteanetal2023} for simulations on system \eqref{Eq:MainModel}). 

Modeling the evaporation process in conjunction with phase separation is usually done at the discrete level with a Monte--Carlo simulation with the solvent particles taking some particular behavior (e.g.,point--wise replacement or replacement at the boundary \cite{Andrea_EPJ,Andrea_PhysRevE,Mario}). The choice of solvent behavior and spatial dimension changes the interpretation of this evaporation process. Generally for two dimensional simulations, one takes either a `from the top' or a `from the side' perspective. The main difficulty is finding the continuum limit of such evaporation models. Therefore, one of the goals of this manuscript is to find a way to adjust system \eqref{Eq:MainModel}, a system derived without an evaporation process, to reasonably simulate evaporation with either perspective.  

Discrete lattice simulations of this behavior often assign a `spin' to the particles denoted as in \eqref{Eq_HamMarra} by $\sigma (x) \in \{-1,0,+1\}$. In this set-up, the solvent particle would be assigned spin $0$, while the solute particles would have spin $+ 1$ or $-1$. The concentration of the solution at a site $x$ would then be represented by $|\sigma(x)|$ (traditionally in the literature as $\sigma(x) ^2$). This is the measure of how much solute (independent of the spin direction) is located at a particular site. 

Relating this intuition back to system \eqref{Eq:MainModel}, we let $m$ and $\phi$ be continuous measures of the average spin (also referred to as magnetization in \cite{Marra}) and concentration in a given region, respectively. In the context of the discrete model, these terms mean for a collection of sites $A := \{x_i\}_{i=1}^{N}$ 
\[m_A = \frac{1}{N} \sum_{i=1}^N \sigma (x_i) \text{ and } \phi_A = \frac{1}{N} \sum_{i=1}^N \sigma(x_i)^2 = \frac{1}{N} \sum_{i=1}^N |\sigma(x_i)|. \]
Therefore at this level, inequalities \eqref{Assump:MPhi} and \eqref{Ineq:MPhi} are just applications of the triangle inequality. Since system \eqref{Eq:MainModel} is the continuum limit of this stochastic interpretation under the Hamiltonian \eqref{Eq_HamMarra}, we maintain this inequality with assumption \eqref{Assump:MPhi}.

Returning back to the continuous setting and system \eqref{Eq:MainModel}, $m: \bar Q_T \longrightarrow \R$ is a function describing the net spin of particles in a given region. In other words, for a given Borel set $A \subseteq \Omega$, $\int_A m(t,x) \diff x$ is the net spin of particles in the set $A$ at time $t$. As the spin in the Hamiltonian \eqref{Eq_HamMarra} only takes values in $\{-1,0,+1\}$, we aim to show that $|m| \leq 1$ almost everywhere in $Q_T$. To capture the nuances of solvent particles, the function $\phi: \bar Q_T \longrightarrow \R$ represents the concentration of solute particles, i.e.,$\int_A (1-\phi(t,x))\diff x$ represents the solvent ratio in the set $A$ at time $t$. Since $\phi$ represents the concentration, we will show that the physical inequality $0\leq |m| \leq \phi \leq 1$ holds almost everywhere in $Q_T$.

\subsection{A model with evaporation}
Keeping all previous assumptions, we now adjust system \eqref{Eq:MainModel_withoutEvap} to include a source term for the concentration component.
\begin{equation}\label{Eq:MainModel}
\left\lbrace
\begin{split}
    \p_t m &= \nabla \cdot \left[\nabla m - 2 \beta (\phi -m^2 ) (\nabla J * m) \right], \quad (t,x) \in (0,T)\times\Omega =: Q_T,\\
    \p_t \phi &= \nabla \cdot \left[ \nabla \phi - 2 \beta m (1 - \phi) (\nabla J * m) \right] + F(\phi).
\end{split}\right. 
\end{equation}
Here, $F:[0,1] \longrightarrow \R$ is a bounded non-increasing Lipschitz continuous function with $F(1) = 0$. Since we do not know \textit{a priori} $0\leq \phi \leq 1$, we extend $F$ to all of $\R$ by 0. This reduces the regularity of $F$, however, the only point of discontinuity is at 0, a fact we exploit later. As $F$ is a source term for the solute concentration function, $\phi$, in 2 dimensions, this can represent a `from the top' evaporation rate.

Our main result is the well-posedness of system \eqref{Eq:MainModel} and the fact that the qualitative property \eqref{Assump:MPhi} is conserved on $Q_T$. Particularly, we are interested in weak solutions to \eqref{Eq:MainModel}:
\begin{definition} \label{Def:MainSol}
    We say a pair $(m,\phi) \in \left( L^2(0,T;H^1_\sharp(\Omega)) \cap L^\infty(0,T;L^2(\Omega)) \right)^2$ with $(\p_t m, \p_t \phi) \in \left( L^2(0,T;H^{-1}_\sharp(\Omega)) \right)^2 $ is a weak solution to \eqref{Eq:MainModel} if $(m(t=0),\phi(t=0)) = (m_0,\phi_0)$ and for any $t \in (0,T)$, 
    \[   \langle \p_t m , \psi \rangle  + \int_\Omega \nabla m \cdot  \nabla \psi -  2\beta (\phi - m^2) (\nabla J* m) \cdot \nabla \psi \diff x  = 0, \] and
    \[  \langle \p_t \phi , \eta \rangle  + \int_\Omega  \nabla \phi \cdot  \nabla \eta -  2\beta m (1-\phi) (\nabla J* m) \cdot \nabla \eta \diff x  = \int_\Omega F(\phi) \eta \diff{x}, \]
    for all $(\psi,\eta) \in L^2(0,T;H^1_\sharp(\Omega)) \times L^2(0,T;H^1_\sharp(\Omega))$.
\end{definition}
\noindent Here and throughout the paper, we use the subscript $\sharp$ to restrict function spaces to the subset of spatially periodic functions. 
We aim to prove the following theorem:
\begin{theorem}\label{Thrm:MainThrm}
    Let $m_0 , \phi_0 $ be such that assumption \eqref{Assump:MPhi} holds. Then, there exists a unique weak solution $(m,\phi)$ to \eqref{Eq:MainModel} in the sense of Definition \ref{Def:MainSol}. Moreover, 
    \begin{equation}\label{Ineq:MPhi}
        0 \leq |m| \leq \phi \leq 1 \text{ almost everywhere in }Q_T.
    \end{equation}
\end{theorem}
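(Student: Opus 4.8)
The plan is to build the solution through a regularized approximation scheme, to derive \eqref{Ineq:MPhi} as an a priori invariant-region bound, and to prove uniqueness by a Gr\"onwall estimate on the difference of two solutions. The organizing observation is that in the species variables $u:=\phi+m$ and $v:=\phi-m$ the principal parts decouple into two Fokker--Planck--type equations,
\[ \p_t u = \nabla\cdot\!\left[\nabla u - 2\beta\,u(1-m)(\nabla J*m)\right]+F(\phi),\qquad \p_t v = \nabla\cdot\!\left[\nabla v + 2\beta\,v(1+m)(\nabla J*m)\right]+F(\phi), \]
with $m=(u-v)/2$ and $\phi=(u+v)/2$. Writing $\rho_\pm:=(\phi\pm m)/2$ and $\rho_0:=1-\phi$ for the three particle densities, the inequality \eqref{Ineq:MPhi} is precisely the non-negativity of $\rho_+,\rho_-,\rho_0$ (equivalently, the invariance of the simplex $\{u\ge0,\ v\ge0,\ u+v\le2\}$), and \eqref{Assump:MPhi} says exactly that the initial data lie in this region. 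Because $F$ is non-increasing with $F(1)=0$, one has $F\ge0$ on $[0,1]$, and the extension by zero keeps $F\ge0$ on $\R$ while forcing $F\equiv0$ on $\{\phi>1\}$; this sign pattern is what makes the three bounds compatible.

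First I would set up the approximation. The convolution is strongly smoothing: since $J\in C^2_+(\bar\Omega)$, one has $\|\nabla J*m\|_{L^\infty}\le C\|m\|_{L^2}$ and $\|\Delta J*m\|_{L^\infty}\le C\|m\|_{L^2}$. I would freeze the drift velocity and the nonlinear coefficients at an iterate $(\bar m,\bar\phi)$, solve the two resulting \emph{linear} parabolic problems by Faedo--Galerkin, and so define a solution map $\mathcal T(\bar m,\bar\phi)=(m,\phi)$; a Banach fixed point on a short time interval (the coefficients being Lipschitz in $(\bar m,\bar\phi)$ under $L^\infty$ control) produces a local solution, and the jump of $F$ at $0$ is smoothed away beforehand by a monotone family $F_\varepsilon$. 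All estimates below are carried out on these approximants and passed to the limit.

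The heart of the argument is the invariant-region bound, which I would obtain by testing each density equation with its own negative part. For $u$ this gives
\[ \tfrac12\tfrac{\diff{}}{\diff{t}}\|(u)_-\|_{L^2}^2 + \|\nabla(u)_-\|_{L^2}^2 = \int_\Omega 2\beta\,(u)_-(1-m)(\nabla J*m)\cdot\nabla(u)_- \diff x + \int_\Omega F(\phi)(u)_- \diff x, \]
where $(u)_-:=\min(u,0)$. The source term is $\le0$ since $F\ge0$ and $(u)_-\le0$, and the drift term is absorbed into $\|\nabla(u)_-\|_{L^2}^2$ by Young's inequality once $2\beta(1-m)(\nabla J*m)$ is bounded in $L^\infty$; as $(u)_-(0)=0$ by \eqref{Assump:MPhi}, Gr\"onwall forces $u\ge0$, and likewise $v\ge0$. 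For $\rho_0=1-\phi$, which solves $\p_t\rho_0=\nabla\cdot[\nabla\rho_0+2\beta m\,\rho_0(\nabla J*m)]-F(\phi)$, the same computation leaves the source $\int(-F(\phi))(\rho_0)_-$, supported on $\{\rho_0<0\}=\{\phi>1\}$ where $F\equiv0$; it therefore vanishes and Gr\"onwall yields $\rho_0\ge0$, i.e.\ $\phi\le1$. This is the step I expect to be the main obstacle: justifying the truncation tests at the approximate level, keeping the drift in the non-integrated form above (its divergence is only $L^2$ in space, through $\nabla m\cdot(\nabla J*m)$, so one must use Young rather than integrating by parts), and using the $F$-extension so that all three sign conditions cooperate. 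The apparent circularity between the $L^\infty$-bound on $1\pm m$ and the bound being proved is harmless, because Gr\"onwall with vanishing initial data returns zero independently of the size of the constant.

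With $0\le|m|\le\phi\le1$ in force, all coefficients are bounded; testing \eqref{Eq:MainModel} with $(m,\phi)$ and controlling the convolution drift by $\|\nabla J*m\|_{L^\infty}\le C\|m\|_{L^2}$ and Young's inequality yields uniform bounds in $L^2(0,T;H^1_\sharp)\cap L^\infty(0,T;L^2)$ together with $\p_t(m,\phi)\in L^2(0,T;H^{-1}_\sharp)$. The Aubin--Lions lemma then gives strong $L^2$-compactness, enough to pass to the limit in the nonlinear drift and in $F$ and to obtain a weak solution in the sense of Definition \ref{Def:MainSol} inheriting \eqref{Ineq:MPhi}; the a priori bounds also rule out finite-time breakdown, so the local solution is global. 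For uniqueness I would subtract two solutions and test the differences against themselves: since $\phi_i\in[0,1]$ the discontinuity of $F$ is never seen and $F$ acts as a genuinely Lipschitz nonlinearity, while all coefficient and convolution differences are controlled by $\|m_1-m_2\|_{L^2}$ and $\|\phi_1-\phi_2\|_{L^2}$ thanks to the $L^\infty$-bounds; absorbing gradients by Young and applying Gr\"onwall with zero initial difference gives $(m_1,\phi_1)=(m_2,\phi_2)$.
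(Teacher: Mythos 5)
Your overall strategy (the species variables $u=\phi+m$, $v=\phi-m$, truncation tests with negative parts, energy estimates plus Aubin--Lions, Gr\"onwall uniqueness) is the same as the paper's, but your approximation architecture has a genuine gap at exactly the step you flagged as the ``heart of the argument.'' You freeze \emph{everything} --- the drift velocity $\nabla J*\bar m$, the coefficients $(1\mp\bar m)$, \emph{and} the source $F(\bar\phi)$ --- and then claim the invariant-region bounds are ``carried out on these approximants and passed to the limit.'' They cannot be, for the upper bounds. Your computation for $\rho_0=1-\phi$ uses two structural features of the \emph{full} system: the source is $F$ of the solution's own $\phi$ (so it vanishes on $\{\phi>1\}$), and the drift coefficient $2\beta m\rho_0(\nabla J*m)$ is proportional to $\rho_0$ (so it vanishes on $\{\rho_0=0\}$). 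Both are destroyed by freezing: the frozen source is $F_\varepsilon(\bar\phi)$, which has no reason to vanish where the \emph{output} $\phi$ exceeds $1$ and enters the estimate with the wrong sign, and the frozen drift coefficient for $\rho_0$ is $2\beta(m-\bar m\phi)(\nabla J*\bar m)=2\beta\bigl[(m-\bar m)-\bar m\rho_0\bigr](\nabla J*\bar m)$, whose first piece does not vanish on $\{\rho_0=0\}$ and after Young's inequality leaves a term $C\|m-\bar m\|_2^2$ that Gr\"onwall cannot kill. So $\phi\le 1$ (hence $|m|\le 1$) is \emph{not} propagated by your frozen solution map; consequently the $L^\infty$ bounds on the iterates --- which you need both for the truncation tests (boundedness of $2\beta(1\mp\bar m)(\nabla J*\bar m)$) and for the Lipschitz/contraction estimates of your Banach fixed point --- are unproven, and the scheme does not close. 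Your dismissal of the circularity (``Gr\"onwall returns zero independently of the size of the constant'') fails for the same reason: the argument needs the constant to be \emph{finite}, i.e.\ $m\in L^\infty(Q_T)$ already, which is exactly what is not known for approximants or for a limit in the energy class.

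This is precisely the difficulty the paper's two-tier design is built to avoid. Its auxiliary problem \eqref{Eq:AuxMain} freezes \emph{only} the bounded factors $B_1=\beta(\bar m-1)$, $B_2=\beta(\bar m+1)$ with $\bar m$ taken from the set $\mathcal{Z}=\{|\bar m|\le1\}$, while keeping the convolution $\nabla J*(w-v)=2\nabla J*m$ and the source $F(\tfrac12(w+v))=F(\phi)$ \emph{live} as functions of the unknowns; the inner Picard iteration then only needs non-negativity and mass bounds (giving $\|\nabla J*(w_n-v_n)\|_\infty\le C$ via $L^\infty * L^1$), not $L^\infty$ bounds. The upper bounds $|m|\le1$ and $\phi\le1$ are recovered for the \emph{auxiliary solutions} (Propositions \ref{Prop:NonLocal_Bounded} and \ref{Prop:ConBound}), which is possible only because the convolution and source there involve the solution itself, and the outer loop is closed by Schauder on $\mathcal{Z}$ rather than by a contraction needing quantitative $L^\infty$ control. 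To repair your proof you would have to do the same: either keep $\nabla J*m$ and $F(\phi)$ unfrozen in the approximants (accepting that the inner problems stay nonlinear and require their own sub-iteration), or enforce $|\bar m|\le1$ through the fixed-point set itself and use compactness (Schauder) instead of contraction --- at which point you have reproduced the paper's argument.
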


To this end, assume for a moment that solutions to \eqref{Eq:MainModel} exist. Then the functions $w = \phi + m$ and $v = \phi - m$ would satisfy the system
\begin{equation}\label{Eq:SumDiff_System}
\left  \lbrace
\begin{split}
    \p_t w &= \Delta w + \text{div}[\beta (m-1) (\nabla J* (w-v)) w] + F(\tfrac{1}{2}(w+v)), \\
    \p_t v &= \Delta v + \text{div}[\beta(m+1) (\nabla J*(w-v)) v]+ F(\tfrac{1}{2}(w+v)), \\
    m  &= \frac{1}{2}(w-v),
\end{split} \right.   \quad (t,x) \in Q_T.
\end{equation}
This manipulation has revealed some hidden symmetry in our problem which turns out to be exploitable. Notice now that assumption \eqref{Assump:MPhi} implies certain bounds on the initial data $w_0,v_0$, specifically,
\begin{equation}\label{Assump:WV_data}
    0\leq w_0, v_0 \leq 2.
\end{equation}

It is easy to see that existence of solutions to system \eqref{Eq:SumDiff_System} implies the existence of solutions to the original problem \eqref{Eq:MainModel}. Moreover, the linear nature of the transformations between the systems allow us to carry over regularity properties on $(w,v)$ to $(m,\phi)$. Also, the conservation of inequality \eqref{Assump:WV_data} for all time will imply the conservation of part of \eqref{Assump:MPhi}, namely, $0 \leq |m| \leq \phi$. This combined with an \textit{a priori } result proven later will yield the conservation of \eqref{Assump:MPhi} for all $t\in [0,T)$.

Equation \eqref{Eq:SumDiff_System} is written in a peculiar way to foreshadow our plan of attack. We aim to use a Schauder fixed-point argument to prove the existence of solutions by finding a fixed-point on the composite map \[\mathcal{T}: \bar{m} \xmapsto{\mathcal{T}_1} (\bar{w},\bar{v}) \xmapsto{\mathcal{T}_2} \frac{1}{2}(\bar{w}-\bar{v}).\] Such arguments are common in literature, see e.g.,\cite{giacomin1998phase} for an application to a similar equation to the first in \eqref{Eq:MainModel} and \cite{eden2022multiscale} for an application to systems.

\subsection{Notation and organization of the paper}\label{Sec:Notation}

Positive constants whose explicit value is unimportant to the story will be denoted arbitrarily by $C$. If the constant depends on a particular function, $f$, this will be denoted by $C_f$. We warn that constants who share the same notation, need not share the same value. 

We liberally make use of Young's inequality for convolutions:
\begin{equation}\label{Eq:YoungsConvolution}
    \|f*g\|_r \leq \|f\|_p \|g\|_q,  \quad\text{where}\quad \frac{1}{p} +\frac{1}{q} = 1 + \frac{1}{r},
\end{equation}
whenever the above norms make sense for the functions in question.

For lucidity, we  use a short hand notation for spaces which appear frequently throughout the manuscript. For a given $\Omega \subset \mathbb{R^d}$, we define the following sets:
\[ \mathcal{X} : = L^2(0,T;H^1_\sharp(\Omega)) \cap L^\infty(0,T;L^2(\Omega));\]
\[\mathcal{W} : = \left \lbrace u \in L^2(0,T;H^1_\sharp(\Omega)) : \frac{\diff}{\diff{t}}u \in L^2(0,T;H^{-1}_\sharp(\Omega)) \right \rbrace .\]

The rest of paper is organized in the following way: in Section \ref{Sec:AuxProblem}, we show the well--posedness of an auxiliary problem via an iteration scheme; in Section \ref{Sec:FixedPoint} we prove the existence of a fixed--point to equation \eqref{Eq:SumDiff_System} and complete the proof of Theorem \ref{Thrm:MainThrm}; in Section \ref{Sec:Evap} we describe a special case of how one would model the evaporation process and provide some numerical simulations. Finally, in Section \ref{Sec:Discussion} we conclude with some future work and some discussion on the results.

\section{Analysis of auxiliary problems}\label{Sec:AuxProblem}
Before we can show the well--posedness of system \eqref{Eq:SumDiff_System} (and therefore system \eqref{Eq:MainModel}), we collect some useful auxiliary and \textit{a priori} results. First, while \textit{a priori} boundedness of solutions to \eqref{Eq:MainModel} is not immediately clear, we can infer that $\phi$ is bounded given that $m$ is bounded. This result will be useful to us later in showing the map $\mathcal{T}: \bar m \mapsto \frac{1}{2}(\bar w - \bar v)$ maps bounded functions to bounded functions. 
\begin{prop}\label{Prop:ConBound}
    Let $(m,\phi)$ be solutions to \eqref{Eq:MainModel} in the sense of Definition \ref{Def:MainSol}. If $|m| \leq 1$ almost everywhere in $Q_T$,  then it also holds $\phi \leq 1$ almost everywhere in $Q_T$.
\end{prop}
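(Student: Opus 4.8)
The plan is to run a Stampacchia-type truncation (energy) argument, testing the weak formulation of the $\phi$-equation with the positive-part truncation $\zeta := (\phi-1)_+$. Since $\phi \in L^2(0,T;H^1_\sharp(\Omega))$, the truncation $\zeta$ again lies in $L^2(0,T;H^1_\sharp(\Omega))$ with $\nabla\zeta = \mathbf{1}_{\{\phi>1\}}\nabla\phi$, so it is an admissible choice of $\eta$. The goal is to derive a Gr\"onwall inequality for $t \mapsto \|\zeta(t)\|_{L^2(\Omega)}^2$ and conclude, using the initial bound, that $\zeta \equiv 0$.

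First I would treat the time-derivative term. Because $\phi \in \mathcal{W}$ and $s \mapsto (s-1)_+$ is Lipschitz, the standard chain-rule lemma for the Gelfand triple gives $\langle \p_t \phi, \zeta\rangle = \tfrac{1}{2}\tfrac{\diff}{\diff t}\|\zeta\|_{L^2}^2$; integrating in time and using $\zeta(0) = (\phi_0-1)_+ = 0$ (which follows from the standing assumption $\phi_0 \le 1$ in \eqref{Assump:MPhi}) leaves only $\tfrac12\|\zeta(t)\|_{L^2}^2$. The diffusion term becomes $\int_\Omega \nabla\phi\cdot\nabla\zeta\,\diff x = \|\nabla\zeta\|_{L^2}^2 \ge 0$ and is kept on the favorable side. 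The crucial observation is that the source term vanishes identically: on $\{\phi \le 1\}$ one has $\zeta = 0$, whereas on $\{\phi > 1\}$ the extension-by-zero together with $F(1)=0$ gives $F(\phi)=0$, so $\int_\Omega F(\phi)\,\zeta\,\diff x = 0$. This is precisely where the design of $F$ is used.

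The main work, and the step I expect to be the principal obstacle, is controlling the nonlinear non-local drift term $-\int_\Omega 2\beta\, m(1-\phi)(\nabla J * m)\cdot\nabla\zeta\,\diff x$. Here I would exploit the algebraic identity $(1-\phi) = -\zeta$ on $\{\phi>1\}$, so that $(1-\phi)\nabla\zeta = -\zeta\nabla\zeta = -\tfrac12\nabla\zeta^2$, turning the drift contribution into $\int_\Omega 2\beta\, m\,\zeta\,(\nabla J * m)\cdot\nabla\zeta\,\diff x$. Using the hypothesis $|m|\le 1$ and Young's inequality for convolutions \eqref{Eq:YoungsConvolution}, the kernel is bounded by $\|\nabla J * m\|_{L^\infty} \le \|\nabla J\|_{L^1}\|m\|_{L^\infty} \le \|\nabla J\|_{L^1} =: C_J$, so this term is estimated by $2\beta C_J \|\zeta\|_{L^2}\|\nabla\zeta\|_{L^2}$. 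A weighted Young inequality splits this as $\|\nabla\zeta\|_{L^2}^2 + \beta^2 C_J^2 \|\zeta\|_{L^2}^2$, and the gradient piece is absorbed by the diffusion term.

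Collecting the estimates yields $\tfrac{\diff}{\diff t}\|\zeta\|_{L^2}^2 \le 2\beta^2 C_J^2\|\zeta\|_{L^2}^2$ with $\|\zeta(0)\|_{L^2} = 0$; Gr\"onwall's lemma then forces $\|\zeta(t)\|_{L^2} = 0$ for a.e.\ $t$, i.e.\ $(\phi-1)_+ = 0$ and hence $\phi \le 1$ almost everywhere in $Q_T$. The delicate points to verify carefully are the admissibility of the chain-rule identity for the Lipschitz truncation in the Gelfand-triple setting, and the integrability required to rewrite the drift via $\nabla\zeta^2$, which follows from $\zeta \in L^\infty(0,T;L^2(\Omega))\cap L^2(0,T;H^1_\sharp(\Omega))$.
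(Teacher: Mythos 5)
Your proposal is correct and follows essentially the same argument as the paper: the paper tests with $[1-\phi]^-$ (which is your $(\phi-1)_+$ up to sign), kills the source term using the support of the extended $F$, rewrites $(1-\phi)$ as the truncation itself on the bad set, bounds $\|\nabla J * m\|_\infty$ via $|m|\le 1$ and Young's convolution inequality, absorbs the gradient by weighted Young, and concludes with Gr\"onwall. Your write-up is in fact slightly more careful than the paper's (explicitly justifying the Gelfand-triple chain rule and the admissibility of the Lipschitz truncation), but there is no substantive difference in method.
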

\begin{proof}
    Setting $\eta = [1-\phi]^- $ the weak form of $\phi$ from Definition \ref{Def:MainSol}, we see that due to the support of $F(\phi)$, $F(\phi)\eta = 0$. And so, taking the absolute value of the right-hand side of the weak form yields:
    \[ \frac{\diff}{\diff t} \| \eta \|_2^2 + \|\nabla \eta \|_2^2  \leq \int_\Omega |(2\beta m \eta \nabla J* m) \cdot \nabla \eta |\diff x .\]
    Using H\"older and Young's inequalities, we arrive at
    \[\frac{\diff}{\diff t} \| \eta \|_2^2 + \|\nabla \eta \|_2^2  \leq 2\beta^2 C_J  \|\eta\|_2^2 + \frac{1}{2}  \| \nabla \eta \|_2^2. \]
    A standard application of the Gr\"onwall inequality (see, e.g., \cite[Chapter 7.1]{Evans_pde}) and using the fact that $\eta (t=0) \equiv 0$ implies that $\eta = [1-\phi]^- = 0$ almost everywhere in $Q_T$.
\end{proof}

 With similar arguments and motivation, we show the following auxiliary result:
\begin{prop}\label{Prop:NonLocal_Bounded}
    Assume for some $B: (0,T) \times \Omega \longrightarrow \mathbb{R}$ with $\|B\|_{L^\infty(0,T; L^2(\Omega))} \leq C_B$ and $J\in C_+^2(\Omega)$, we have a weak solution $u \in L^2(0,T; H^1_\sharp (\Omega))$ to the equation
    \begin{equation*}
        \p_t u = \Delta u + \text{div}(B \nabla J* u), \quad (t,x) \in Q_T
    \end{equation*}
    equipped with periodic boundary conditions and where $u(t=0) = u_0 $ with $ |u_0| \leq C$ almost everywhere in $\bar \Omega$. Then, $|u| \leq C $ almost everywhere in $Q_T$.
\end{prop}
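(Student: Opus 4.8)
The plan is to prove the two-sided bound by a Stampacchia-type truncation, testing the weak formulation against the part of $u$ lying above the threshold and, symmetrically, below it, exactly in the spirit of the proof of Proposition~\ref{Prop:ConBound}. To obtain $u\le C$ I would use $\eta=[u-C]^+$ as a test function, which is admissible since $u\in L^2(0,T;H^1_\sharp(\Omega))$ forces $\eta\in L^2(0,T;H^1_\sharp(\Omega))$; to obtain $u\ge -C$ I would repeat the argument verbatim with $\eta=[u+C]^-$. Because $|u_0|\le C$, both truncations vanish at $t=0$, so it suffices to show that each remains zero throughout $Q_T$.

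For the upper bound, inserting $\eta=[u-C]^+$ and using the identities $\langle\p_t u,\eta\rangle=\tfrac12\frac{\diff}{\diff t}\|\eta\|_2^2$ and $\int_\Omega\nabla u\cdot\nabla\eta\,\diff x=\|\nabla\eta\|_2^2$, the diffusion yields the coercive term $\|\nabla\eta\|_2^2$ and I am left to dominate the nonlocal drift contribution $-\int_\Omega B\,(\nabla J* u)\cdot\nabla\eta\,\diff x$, whose integrand is supported on the super-level set $\{u>C\}$. I would estimate the convolution in $L^\infty$ through Young's inequality~\eqref{Eq:YoungsConvolution}, namely $\|\nabla J* u\|_\infty\le\|\nabla J\|_2\|u\|_2\le C_J\|u\|_2$, which is finite and uniform in time once one notes that a weak solution satisfying $\p_t u\in L^2(0,T;H^{-1}_\sharp(\Omega))$ belongs to $C([0,T];L^2(\Omega))$ and hence obeys an a priori bound $\sup_t\|u\|_2\le K$. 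Combining this with H\"older's inequality, the assumption $\|B\|_{L^\infty(0,T;L^2(\Omega))}\le C_B$, and Young's inequality for products gives
\[
-\int_\Omega B\,(\nabla J* u)\cdot\nabla\eta\,\diff x
\le C_J K\,\big\| B\,\mathbf{1}_{\{u>C\}}\big\|_2\,\|\nabla\eta\|_2
\le \tfrac12\|\nabla\eta\|_2^2+\tfrac12 C_J^2 K^2\big\| B\,\mathbf{1}_{\{u>C\}}\big\|_2^2 ,
\]
so that the gradient term is absorbed and one is left with a differential inequality for $\|\eta\|_2^2$.

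An application of Gr\"onwall's lemma (see, e.g., \cite[Chapter~7.1]{Evans_pde}) together with $\eta(t=0)=0$ should then force $\eta\equiv0$, i.e.\ $u\le C$ almost everywhere, and the lower bound follows identically. The crux --- and what separates this estimate from the purely local computation in Proposition~\ref{Prop:ConBound}, where the coefficient $(1-\phi)$ collapses to $\eta$ on the active set and the estimate closes to the homogeneous form $\frac{\diff}{\diff t}\|\eta\|_2^2\le C\|\eta\|_2^2$ --- is precisely the nonlocal drift: $\nabla J* u$ ties the super-level set to the global profile of $u$ and does not visibly vanish where $\eta$ is supported. I therefore expect the main obstacle to be reducing the forcing $\big\|B\,\mathbf{1}_{\{u>C\}}\big\|_2^2$ to a multiple of $\|\eta\|_2^2$, which appears to require genuinely exploiting the $L^2$-integrability of $B$ over the shrinking set $\{u>C\}$ (and the absolute continuity of $\int_{\{u>C\}}|B|^2\,\diff x$ as this set contracts) rather than the crude global bound on $\nabla J* u$ alone.
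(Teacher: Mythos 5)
The gap you flag in your own closing paragraph is genuine and it is fatal to the argument as written. After absorbing $\tfrac12\|\nabla\eta\|_2^2$, what you are left with is
\[
\frac{\diff}{\diff t}\|\eta\|_2^2 \;\le\; C_J^2K^2\,\bigl\| B\,\mathbf{1}_{\{u>C\}}\bigr\|_2^2,
\]
whose right-hand side is not dominated by any function of $\|\eta\|_2^2$. Gr\"onwall applied to such an inequality only yields the growth bound $\|\eta(t)\|_2^2\le C_J^2K^2\int_0^t\|B\,\mathbf{1}_{\{u>C\}}\|_2^2\diff s$, not $\eta\equiv0$; and the hoped-for rescue via the ``absolute continuity of $\int_{\{u>C\}}|B|^2$ as the set contracts'' is circular, because the contraction of $\{u>C\}$ is precisely what is to be proved. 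The root cause is the step where you estimate the convolution by the crude global bound $\|\nabla J*u\|_\infty\le C_J\|u\|_2\le C_JK$: this bound never sees the level $C$, so the drift enters your inequality as an external forcing of fixed size, and no Gr\"onwall (or Stampacchia-type) iteration can close from there.

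The single idea you are missing --- and the crux of the paper's own proof --- is that the constant $C$ is invisible to the kernel: since $\nabla J$ has zero average over $\Omega$ (compact support, respectively periodicity), one has $\nabla J * C = 0$, and hence
\[
\nabla J * u \;=\; \nabla J * (u-C).
\]
The paper then bounds $\|\nabla J*(u-C)\|_\infty\le\|\nabla J\|_2\,\|\eta\|_2$, so the drift contribution becomes $C_JC_B\|\eta\|_2\|\nabla\eta\|_2\le\tfrac12 C_J^2C_B^2\|\eta\|_2^2+\tfrac12\|\nabla\eta\|_2^2$, the inequality closes homogeneously as $\frac{\diff}{\diff t}\|\eta\|_2^2\le C_J^2C_B^2\|\eta\|_2^2$, and Gr\"onwall with $\eta(t=0)\equiv0$ gives $\eta\equiv0$; the lower bound follows symmetrically with $[u+C]^-$. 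Two honest caveats: first, in the proposition $B$ is merely $L^\infty(0,T;L^2(\Omega))$, so the correct H\"older splitting is $\|\nabla J*(u-C)\|_\infty\,\|B\|_2\,\|\nabla\eta\|_2$ exactly as above --- your variant with $\|B\mathbf{1}_{\{u>C\}}\|_2$ is not wrong, just not the quantity that can be closed. Second, note that the paper's estimate identifies $\nabla J*(u-C)$ with $\nabla J*[u-C]^+$, i.e.\ it silently discards the part $[u-C]^-$, which is not controlled by $\eta$; this is exactly the piece your last paragraph worries about, so your diagnosis of where the real difficulty of the nonlocal term lives is accurate. But with or without that subtlety, the zero-average identity $\nabla J*C=0$ is the device that converts the forcing into a multiple of $\|\eta\|_2$, and without it your estimate cannot conclude.
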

\begin{proof}
    From the weak form of the equation, we have for any $\eta \in L^2(0,T; H^1_\sharp (\Omega))$,
    \[ \langle \p_t u , \eta \rangle  + \int_\Omega \nabla u \cdot  \nabla \eta +  (B \nabla J* u) \cdot \nabla \eta \diff x  = 0 .\]
    Letting $\eta = [u - C]^+$, we have from the above,
    \[ \frac{d}{dt} \| \eta \|_2^2 + \|\nabla \eta \|_2^2  = -\int_\Omega (B \nabla J* u) \cdot \nabla \eta \diff x .\]
    By the assumptions on $J$, we have $\nabla J * C = 0$, we see that $\nabla J * u = \nabla J * (u-C)$. Therefore, using H\"older and Young's inequalities, 
    \begin{align*}
        \frac{d}{dt} \| \eta \|_2^2 + \|\nabla \eta \|_2^2 &\leq \|\nabla J * \eta \|_\infty \, \|B\|_2 \,\|\nabla \eta \|_2 \\
        &\leq \|\nabla J\|_2 \,\| \eta\|_2 \, \|B\|_2\, \|\nabla \eta \|_2 \\
        &\leq\frac{1}{2}C_J^2 \, C_B^2\, \| \eta\|_2^2 + \frac{1}{2}\|\nabla \eta \|_2^2 .
    \end{align*}
    A straightforward application of the Gr\"onwall's inequality implies that $\|\eta \|_2^2 = 0$ for all $t\in (0,T)$. Repeating this argument for the test function $\psi = [u + C ]^-$ implies the result.
    
\end{proof}

\subsection{Auxiliary system}
In this section, we prove the following system is well-posed:
\begin{equation}\label{Eq:AuxMain}
    \left  \lbrace
    \begin{split}
        \p_t w &= \Delta w + \text{div}[B_1 (\nabla J* (w-v)) w] + F(\tfrac{1}{2}(w+v)) \\
        \p_t v &= \Delta v + \text{div}[B_2 (\nabla J*(w-v)) v] + F(\tfrac{1}{2}(w+v))\\
    \end{split} \right.  , \quad (t,x) \in  Q_T,
\end{equation}
equipped with periodic boundary conditions, where for $i= 1,2$, $B_i : Q_T \longrightarrow \mathbb{R} \in L^\infty(Q_T)$ and  $w(0,x) = w_0(x)$, $v(0,x) = v_0(x)$ are bounded and nonnegative for all $x\in\bar\Omega$. Furthermore,  we set $\int_\Omega w_0 \diff x = K_w$, $\int_\Omega v_0 \diff x = K_v$ for some $K_w, K_v \geq 0$.  We understand solutions to equation \eqref{Eq:AuxMain} in the standard weak sense:
\begin{definition} \label{Def:AuxSol}
    We say a pair $(w,v) \in \mathcal{X} ^2$ with $(\p_t w, \p_t v) \in L^2(0,T;H^{-1}_\sharp(\Omega))^2 $ is a solution to \eqref{Eq:AuxMain} if $(w(t=0),v(t=0)) = (w_0,v_0)$ and for every $t \in (0,T)$, 
    \[   \langle \p_t w , \psi \rangle  + \int_\Omega \nabla w \cdot  \nabla \psi +  w B_1 (\nabla J* (w-v)) \cdot \nabla \psi \diff x  = \int_\Omega F(\tfrac{1}{2}(w+v)) \psi \diff{x}, \] and
    \[ \langle  \p_t v , \eta \rangle  + \int_\Omega \nabla v \cdot  \nabla \eta +  v B_2 (\nabla J* (w-v)) \cdot \nabla \eta \diff x  = \int_\Omega F(\tfrac{1}{2}(w+v)) \eta \diff{x}, \]
    for all $(\psi,\eta) \in L^2(0,T;H^1_\sharp(\Omega))^2$.
\end{definition}
Inspired by \cite{CHAZELLE2017365,Vera_2017}, we use an iteration scheme method to prove the following theorem
\begin{theorem}\label{Thrm:AuxExistence}
    There exist a unique non-negative weak solution in the sense of Definition \ref{Def:AuxSol} to \eqref{Eq:AuxMain} with the following estimates 
    \[ \| w\|_{L^\infty(0,T;L^2(\Omega))} +\|w\|_{L^2(0,T; H^1_\sharp(\Omega))} + \|\p_t w\|_{L^2(0,T; H^{-1}_\sharp(\Omega))} \leq C_{\Vec{B},J} (\|w_0\|_2 + \|v_0\|_2 + \|F\|_\infty \sqrt{T} ),\]
    and
    \[\| v\|_{L^\infty(0,T;L^2(\Omega))} +\|v\|_{L^2(0,T; H^1_\sharp(\Omega))} + \|\p_t v\|_{L^2(0,T; H^{-1}_\sharp(\Omega))} \leq C_{\Vec{B},J} (\|w_0\|_2 + \|v_0\|_2 + \|F\|_\infty \sqrt{T} ),\]
    where $\Vec{B}:= (B_1,B_2)$.
    Moreover, 
    \[\int_\Omega w \diff x \leq K_w + C_F t \text{ and } \int_\Omega v \diff x \leq  K_v + C_F t\text{ for all } t\in [0,T).\]
\end{theorem}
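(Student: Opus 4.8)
The plan is to build the solution through a decoupling iteration. Starting from $(w^0,v^0)=(w_0,v_0)$, I would define $(w^n,v^n)$ as the solution of the \emph{linear} problem obtained by freezing the drift velocity and the source at the previous iterate,
\begin{align*}
\p_t w^n &= \Delta w^n + \text{div}[B_1\,(\nabla J*(w^{n-1}-v^{n-1}))\,w^n] + F(\tfrac12(w^{n-1}+v^{n-1})),\\
\p_t v^n &= \Delta v^n + \text{div}[B_2\,(\nabla J*(w^{n-1}-v^{n-1}))\,v^n] + F(\tfrac12(w^{n-1}+v^{n-1})).
\end{align*}
The advantage of this linearization is that the two equations decouple and each is of the form $\p_t u=\Delta u+\text{div}(a\,u)+f$ with a \emph{given} drift $a=B_i\,\nabla J*(w^{n-1}-v^{n-1})\in L^\infty(Q_T)$ and a given source $f\in L^\infty(Q_T)$. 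Well-posedness of each $w^n,v^n\in\mathcal{X}$ with time derivative in $L^2(0,T;H^{-1}_\sharp(\Omega))$ then follows from standard linear parabolic theory (Galerkin approximation, or the Lions--Lax--Milgram theorem for the associated G\aa rding-coercive bilinear form).

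Next I would propagate three properties by induction on $n$: nonnegativity, a mass bound, and uniform energy estimates. For nonnegativity I test the equation for $w^n$ with $[w^n]^-$; since $F$ is non-increasing with $F(1)=0$ and is extended by $0$, one has $F\ge0$ on all of $\R$, so the source enters with a favourable sign, the drift is absorbed using $\|a\|_\infty$ and Young's inequality, and Gr\"onwall together with $[w_0]^-=0$ forces $w^n\ge0$ (and likewise $v^n\ge0$). For the mass bound I test with the constant $1$: periodicity kills the divergence and Laplacian terms, so $\frac{\diff}{\diff t}\int_\Omega w^n=\int_\Omega F\le C_F$, whence $\int_\Omega w^n\le K_w+C_Ft$, and symmetrically for $v^n$.

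The crux is the uniform energy estimate, and this is where the previous two properties pay off. Testing the $w^n$-equation with $w^n$ and using Young's inequality produces a term controlled by $\|B_1\|_\infty^2\,\|\nabla J*(w^{n-1}-v^{n-1})\|_\infty^2\,\|w^n\|_2^2$; bounding the convolution by the $L^2$-norm of the previous iterate would give a Gr\"onwall constant depending on $\|w^{n-1}\|_2$, so the induction would not close. Instead, I exploit nonnegativity to write $\|w^{n-1}-v^{n-1}\|_1\le\int_\Omega(w^{n-1}+v^{n-1})\le K_w+K_v+2C_FT$ and then apply Young's convolution inequality in the form $\|\nabla J*g\|_\infty\le\|\nabla J\|_\infty\|g\|_1$ to obtain
\[ \|\nabla J*(w^{n-1}-v^{n-1})\|_{L^\infty(Q_T)}\le\|\nabla J\|_\infty\,(K_w+K_v+2C_FT), \]
a bound \emph{independent of the iteration index}. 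With this uniform drift bound the energy estimate closes via Gr\"onwall and yields the stated inequality with a constant $C_{\vec B,J}$ independent of $n$; the $\|F\|_\infty\sqrt{T}$ contribution comes from estimating $\int_0^t\int_\Omega F\,w^n$, and the $L^2(0,T;H^{-1}_\sharp(\Omega))$ bound on $\p_t w^n$ follows by testing against $H^1_\sharp$ functions and reusing the drift bound.

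Finally, I would pass to the limit. The uniform bounds and the Aubin--Lions lemma yield a subsequence with $w^n\to w$, $v^n\to v$ strongly in $L^2(Q_T)$ and weakly in $L^2(0,T;H^1_\sharp(\Omega))$, with time derivatives converging weakly in $L^2(0,T;H^{-1}_\sharp(\Omega))$. Linear terms pass by weak convergence; the source passes because $F$ is Lipschitz on $[0,\infty)$, where all arguments $\tfrac12(w+v)$ live by nonnegativity, combined with strong $L^2$ and dominated convergence; and the drift term passes because convolution against the smooth kernel $\nabla J$ upgrades the weak convergence of $w^{n-1}-v^{n-1}$ to pointwise convergence of $\nabla J*(w^{n-1}-v^{n-1})$, which multiplies the strongly convergent $w^n$. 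The limit inherits nonnegativity, the mass bound, and the energy estimates by weak lower semicontinuity. For uniqueness I would subtract the equations for two solutions $(w,v),(\tilde w,\tilde v)$, split the drift difference as $B_1[\nabla J*(W-V)]\,w+B_1[\nabla J*(\tilde w-\tilde v)]\,W$ with $W=w-\tilde w$, $V=v-\tilde v$, control the first piece with the a priori $L^\infty(0,T;L^2(\Omega))$ bound on $w$ and the convolution inequality and the second with the uniform drift bound, and close a Gr\"onwall inequality for $\|W\|_2^2+\|V\|_2^2$ with zero initial data. I expect the \textbf{main obstacle} to be precisely this closure of the uniform estimate: one must resist the insufficient $L^2$ bound on the convolution and instead combine the sign of the solution with the almost-conserved mass to obtain an iterate-independent drift bound, which is exactly what makes the iteration converge on the whole interval $[0,T]$.
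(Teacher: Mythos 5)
Your construction coincides with the paper's up to the last step: the same decoupled linearization freezing the drift velocity and the source at the previous iterate, the same induction propagating nonnegativity (testing with $[w^n]^-$ and using $F\ge 0$), the same mass bound (testing with the constant $1$), and the same key observation that nonnegativity plus the almost-conserved mass gives the iterate-independent drift bound $\|\nabla J *(w^{n-1}-v^{n-1})\|_{L^\infty(Q_T)}\le \|\nabla J\|_\infty (K_w+K_v+2\|F\|_\infty|\Omega|T)$ via Young's convolution inequality, which is what lets the energy estimates close uniformly in $n$. Your uniqueness sketch also matches the paper's remark.

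The genuine gap is in your limit passage. Aubin--Lions gives you a \emph{subsequence} $n_k$ along which $(w^{n_k},v^{n_k})\to(w,v)$ strongly in $L^2(Q_T)$, but the equation for $(w^{n_k},v^{n_k})$ has coefficients built from the \emph{shifted} iterates $(w^{n_k-1},v^{n_k-1})$, and subsequential compactness says nothing about those converging to the same limit. After a further extraction they converge to some $(\hat w,\hat v)$, possibly different from $(w,v)$, and then all you obtain in the limit is that $(w,v)$ solves a \emph{linear} system with drift $\nabla J*(\hat w-\hat v)$ and source $F(\tfrac12(\hat w+\hat v))$ --- not the nonlinear system \eqref{Eq:AuxMain}. (An iteration oscillating between two states is precompact, yet neither cluster point is a fixed point.) This is precisely what the paper's Lemma \ref{Lem:Cauchy} exists to rule out: testing the equations satisfied by the consecutive differences $w_{n+1}-w_n$, $v_{n+1}-v_n$ and running the Gr\"onwall argument shows the sequence is Cauchy in $L^\infty(0,t^*;L^2(\Omega))$ for $t^*$ small enough, so the \emph{whole} sequence --- and hence also the shifted one --- converges to a single limit, which then solves the nonlinear problem; the solution is afterwards continued to all of $[0,T]$ using the uniform energy estimates. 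The computation you already propose for uniqueness, applied to consecutive iterates instead of to two solutions, is exactly the missing ingredient; as written, however, your existence argument does not go through.
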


\begin{remark*}
    While equations in system \eqref{Eq:AuxMain} are quite similar to the equation studied by authors in \cite{CHAZELLE2017365}, they fix a particular function for their convolution and do not consider additional terms in the drift. This allows them certain estimates which are not applicable in our context.
\end{remark*}

\subsection{Iteration scheme}
We define the iteration scheme as follows:
\begin{equation}\label{Eq:Aux_IterationScheme}
    \left  \lbrace
    \begin{split}
        \p_t w_{n+1} &= \Delta w_{n+1} + \text{div}[B_1 w_{n+1} \, \nabla J* (w_{n}-v_{n}) ] + F(\tfrac{1}{2}(w_{n}+v_{n})), \\
        \p_t v_{n+1} &= \Delta v_{n+1} + \text{div}[B_2  v_{n+1} \, \nabla J*(w_{n}-v_{n})] + F(\tfrac{1}{2}(w_{n}+v_{n})), \\
    \end{split} \right.   \quad (t,x) \in  Q_T,
\end{equation}
where the initial conditions $w(0,x) = w_0(x)$ and $v(0,x) = v_0(x)$ are taken as the initial iteration step. We inherit all of the assumptions from equation \eqref{Eq:AuxMain} and define solutions to this system in a similar manner to Definition \ref{Def:AuxSol} with the corresponding weak form:
\begin{equation}\label{Eq:IterationScheme_WeakForm}
\left \lbrace
    \begin{split}
        \langle \p_t w_{n+1} , \psi \rangle  + \int_\Omega  \nabla w_{n+1} \cdot  \nabla \psi +  w_{n+1} B_1 (\nabla J* (w_{n}-v_{n})) \cdot \nabla \psi \diff x  = \int_\Omega F(\tfrac{1}{2}(w_{n}+v_{n})) \psi \diff{x},\\
        \langle\p_t v_{n+1} , \eta \rangle  +\int_\Omega \nabla v_{n+1} \cdot  \nabla \eta +  v_{n+1} B_2 (\nabla J* (w_{n}-v_{n})) \cdot \nabla \eta \diff x  = \int_\Omega F(\tfrac{1}{2}(w_{n}+v_{n})) \eta \diff{x}.
    \end{split} \right. 
\end{equation}

 We begin with an \textit{a priori} result:
\begin{lemma}
    Let $\{w_n\}$ and $\{v_n\}$ be sequences of solutions to \eqref{Eq:Aux_IterationScheme}. Then for each $n$, \[w_n, v_n \geq 0 , \int_\Omega w_n \diff x \leq  K_w + \|F\|_\infty |\Omega| t, \text{ and }\int_\Omega v_n \diff x \leq K_v + \|F\|_\infty |\Omega| t.\]
\end{lemma}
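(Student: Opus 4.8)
The plan is to prove the two assertions separately: first nonnegativity of the iterates via a negative-part test function, and then the mass bounds by testing with the constant function $1$.

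\textbf{Nonnegativity.}
I would argue by induction on $n$. The base case holds since $w_0, v_0 \geq 0$ by assumption. For the inductive step, assume $w_n, v_n \geq 0$ and consider the weak form \eqref{Eq:IterationScheme_WeakForm} for $w_{n+1}$. The key is that the equation for $w_{n+1}$ is \emph{linear} in $w_{n+1}$: the convolution $\nabla J * (w_n - v_n)$ depends only on the previous step and so acts as a fixed (bounded) vector field, and the source $F(\tfrac12(w_n+v_n))$ is a fixed function. The natural test function is $\psi = [w_{n+1}]^- = \max\{-w_{n+1},0\}$, for which $\nabla \psi = -\nabla w_{n+1} \mathbf{1}_{\{w_{n+1}<0\}}$. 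Testing gives
\[
\frac{1}{2}\frac{\diff}{\diff t}\|[w_{n+1}]^-\|_2^2 + \|\nabla [w_{n+1}]^-\|_2^2 = \int_\Omega w_{n+1} B_1 (\nabla J * (w_n - v_n)) \cdot \nabla[w_{n+1}]^- \diff x - \int_\Omega F(\tfrac12(w_n+v_n)) [w_{n+1}]^- \diff x.
\]
On the set where $w_{n+1}<0$ we have $w_{n+1} = -[w_{n+1}]^-$, so the drift term becomes $-\int_\Omega [w_{n+1}]^- B_1 (\nabla J * (w_n-v_n)) \cdot \nabla[w_{n+1}]^- \diff x$, which I bound using Hölder, Young's convolution inequality \eqref{Eq:YoungsConvolution}, and the $L^\infty$ bound on $B_1$, absorbing the gradient into $\tfrac12\|\nabla[w_{n+1}]^-\|_2^2$ exactly as in the proofs of Propositions \ref{Prop:ConBound} and \ref{Prop:NonLocal_Bounded}. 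The source term is where I must be careful: since $F$ is non-increasing with $F(1)=0$ and extended by $0$ off $[0,1]$, I cannot conclude a favorable sign from the previous iterate's values directly, but I \emph{do not need to} — the source term involves $[w_{n+1}]^-$, and since $F$ is bounded I can estimate $-\int_\Omega F(\cdot)[w_{n+1}]^- \diff x \leq \|F\|_\infty |\Omega|^{1/2}\|[w_{n+1}]^-\|_2 \leq C + C\|[w_{n+1}]^-\|_2^2$. This leaves a Grönwall inequality $\frac{\diff}{\diff t}\|[w_{n+1}]^-\|_2^2 \leq C\|[w_{n+1}]^-\|_2^2 + C$, which unfortunately does \emph{not} force the negative part to vanish because of the constant. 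I expect this to be the main obstacle, and I would resolve it by instead exploiting the structure of $F$: the extension of $F$ by $0$ means $F(s)=0$ for $s \le 0$ and $F(s) \geq 0$ only where the argument lies in $[0,1]$; on the region where $w_{n+1}<0$ the source must be handled together with $v_{n+1}$'s equation, or one tests the coupled combination. The cleaner route is to note that since $F \geq 0$ on its support and we test against the \emph{nonnegative} function $[w_{n+1}]^-$, the term $\int_\Omega F(\tfrac12(w_n+v_n))[w_{n+1}]^-\diff x \geq 0$, so its contribution to the right-hand side (with the minus sign) is $\leq 0$ and can be dropped. Then the homogeneous Grönwall inequality with $[w_{n+1}]^-(0)=0$ yields $[w_{n+1}]^-\equiv 0$, i.e.\ $w_{n+1}\geq 0$, and symmetrically $v_{n+1}\geq 0$.

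\textbf{Mass bounds.}
For the integral estimates I would test \eqref{Eq:IterationScheme_WeakForm} with the constant $\psi \equiv 1 \in H^1_\sharp(\Omega)$. The diffusion term $\int_\Omega \nabla w_{n+1}\cdot\nabla 1 \diff x = 0$, and by the periodic boundary conditions the divergence-form drift term also integrates to zero, since $\int_\Omega \nabla\cdot[B_1 w_{n+1}\nabla J*(w_n-v_n)]\diff x = 0$ on the torus. This leaves
\[
\frac{\diff}{\diff t}\int_\Omega w_{n+1}\diff x = \int_\Omega F(\tfrac12(w_n+v_n))\diff x \leq \|F\|_\infty |\Omega|,
\]
using the global bound on $F$. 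Integrating from $0$ to $t$ and invoking $\int_\Omega w_{n+1}(0)\diff x = \int_\Omega w_0\diff x = K_w$ gives $\int_\Omega w_{n+1}\diff x \leq K_w + \|F\|_\infty|\Omega|\,t$, and identically for $v_{n+1}$. The induction closes trivially here since the bound on the right depends only on $K_w$, $K_v$, and data, not on the previous iterate. A minor technical point is that justifying the test function $\psi\equiv 1$ requires the duality pairing $\langle\p_t w_{n+1},1\rangle$ to equal $\frac{\diff}{\diff t}\int_\Omega w_{n+1}\diff x$, which follows from $w_{n+1}\in\mathcal{W}$ by the standard Lions--Magenes identity \cite[Chapter 7.1]{Evans_pde}; I would remark on this briefly and otherwise treat it as routine.
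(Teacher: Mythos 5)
Your proof is correct and takes essentially the same approach as the paper: induction on $n$, testing with the negative parts and discarding the source term by sign (since $F$ is non-increasing with $F(1)=0$ and extended by zero, hence $F\geq 0$, its pairing against the nonnegative test function enters with a favorable sign), giving a homogeneous Gr\"onwall inequality, followed by testing with $\psi,\eta\equiv 1$ for the mass bounds. The detour through the inhomogeneous Gr\"onwall estimate is unnecessary --- the sign argument you settle on is exactly the paper's --- so that passage can simply be deleted.
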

\begin{proof}
    We prove this statement using mathematical induction. By the assumptions on the data $w_0$ and $v_0$, the claim trivially holds. Now assume $w_n, v_n \geq 0$ with \[\int_\Omega w_n \diff x \leq  K_w + \|F\|_\infty |\Omega| t \text{  and } \int_\Omega v_n \diff x \leq  K_v + \|F\|_\infty |\Omega| t.\] Testing the equations in \eqref{Eq:IterationScheme_WeakForm} with $\psi = [w_{n+1} ]^-$ and $\eta = [v_{n+1} ]^-$, and summing them together, we have 
    \begin{align*}
        \frac{1}{2} \frac{\diff}{\diff t} \|\psi \|_2^2 + \| \nabla \psi \|_2^2 + \frac{1}{2} \frac{\diff}{\diff t} \|\eta \|_2^2 + \| \nabla \eta \|_2^2 &= -\int_\Omega B_1 \psi \nabla J * (w_n -v_n) \cdot \nabla \psi \diff x -\int_\Omega B_2 \eta \nabla J * (w_n -v_n) \cdot \nabla \eta \diff x\\
        & \quad + \int_\Omega F(\tfrac{1}{2}(w_{n}+v_{n})) (\psi + \eta) \diff{x}.
    \end{align*}
    Noticing due to the fact that $F$ is nonegative, we have 
    \[ \int_\Omega F(\tfrac{1}{2}(w_{n+1}+v_{n+1})) (\psi + \eta) \diff{x} \leq \int_\Omega F(\tfrac{1}{2}(w_{n}+v_{n})) [w_{n+1} + v_{n+1}]^- \diff{x} \leq 0.\]
    Therefore, we have the inequality
    \[ \frac{1}{2} \frac{\diff}{\diff t} \|\psi \|_2^2 + \| \nabla \psi \|_2^2 + \frac{1}{2} \frac{\diff}{\diff t} \|\eta \|_2^2 + \| \nabla \eta \|_2^2 \leq -\int_\Omega B_1 \psi \nabla J * (w_n -v_n) \cdot \nabla \psi \diff x -\int_\Omega B_2 \eta \nabla J * (w_n -v_n) \cdot \nabla \eta \diff x.    
    \]
    Making use of H\"older's inequality,  Young's inequality, and Young's inequality for convolution we have,
    \begin{align*}
        \frac{1}{2} \frac{\diff}{\diff t} \|\psi \|_2^2 + \frac{1}{2} \frac{\diff}{\diff t} \|\eta \|_2^2 &\leq \frac12 \| B_1 \nabla J * (w_n -v_n)\|_\infty^2  \|\psi\|_2^2  + \frac12 \| B_2 \nabla J * (w_n -v_n)\|_\infty^2  \|\eta\|_2^2  \\
        &\leq \frac12 (C_B C_J (K_w+K_v + 2\|F\|_\infty |\Omega|T))^2 (\|\psi\|_2^2 + \|\eta\|_2^2).
    \end{align*}
    Taking note that $\psi (0,x)\equiv 0 $ and $\eta (0,x)\equiv 0 $, the Gr\"onwall inequality implies that $\psi\equiv 0 $ and $\eta \equiv 0 $ in $Q_T$ and, therefore, $w_{n+1} , v_{n+1} \geq 0.$

    Testing equation \eqref{Eq:IterationScheme_WeakForm} with $\psi, \eta = 1$ yields 
    \[ \frac{\diff }{\diff t} \int_\Omega w_{n+1} \diff x = \int_\Omega F(\tfrac{1}{2}(w_{n}+v_{n}))\diff{x} \text{ and } \frac{\diff }{\diff t} \int_\Omega v_{n+1} \diff x = \int_\Omega F(\tfrac{1}{2}(w_{n}+v_{n}))\diff{x}.\] Since $w_{n+1}(0,x) = w_0(x)$ and $v_{n+1}(0,x) = v_0(x)$ we have \[\int_\Omega w_{n+1} \diff x = \int_\Omega w_{0} \diff x + \int_0^t\int_\Omega F(\tfrac{1}{2}(w_n(s,\cdot)+v_n(s,\cdot)))\diff{x} \diff{t} \leq K_w + \|F\|_\infty |\Omega| t\] and \[\int_\Omega v_{n+1} \diff x = \int_\Omega v_{0} \diff x + \int_0^t\int_\Omega F(\tfrac{1}{2}(w_n(s,\cdot)+v_n(s,\cdot)))\diff{x} \diff{t} \leq K_v + \|F\|_\infty |\Omega| t\] for all $t \in (0,T)$.    
\end{proof}

Due to Young's convolution inequality \eqref{Eq:YoungsConvolution}, we have that the drift term in \eqref{Eq:Aux_IterationScheme} is in fact uniformly bounded. Indeed, \[\| B_i \nabla J * ( w_n -v_n)\|_{L^\infty(Q_T)} \leq \|B_i\|_{L^\infty(Q_T)} \|\nabla J\|_\infty (K_{w} + K_{v} +2|\Omega|\,  T\, \|F\|_\infty).\] Therefore, inductively applying standard Galerkin arguments found in \cite[Section 7.1]{Evans_pde}, we have for each $n$ a unique solution pair $(w_{n+1}, v_{n+1}) \in \mathcal{X}^2$ in the sense of \eqref{Eq:IterationScheme_WeakForm} and with energy estimates similar to those found in Theorem \ref{Thrm:AuxExistence}. Specifically, we have the energy estimates:
    \[ \| w_n\|_{L^\infty(0,T;L^2(\Omega))} +\|w_n\|_{L^2(0,T; H^1_\sharp(\Omega))} + \|\p_t w_n\|_{L^2(0,T; H^{-1}_\sharp(\Omega))} \leq C_{\Vec{B},J} (\|w_0\|_2 + \|v_0\|_2 + \|F\|_{\infty} \sqrt{T} ),\]
    and
    \[\| v_n\|_{L^\infty(0,T;L^2(\Omega))} +\|v_n\|_{L^2(0,T; H^1_\sharp(\Omega))} + \|\p_t v_n\|_{L^2(0,T; H^{-1}_\sharp(\Omega))} \leq C_{\Vec{B},J} (\|w_0\|_2 + \|v_0\|_2 +\|F\|_{\infty} \sqrt{T}),\]
    for each $n = 1,2,\dots .$

We now ensure that our scheme is convergent by proving that it is a Cauchy sequence in a suitable space of functions. Notice that since we have shown $w_n, v_n \geq 0$, we may use the Lipschitz continuity of $F$.

\begin{lemma}\label{Lem:Cauchy}
    There exists a $t^*>0$ such that the sequence $(w_n, v_n)$ is Cauchy in $L^\infty(0,t^*;L^2(\Omega))$.
\end{lemma}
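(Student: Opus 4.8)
The plan is to estimate the increments of the scheme and show they contract on a short time interval. Set $\delta w_n := w_{n+1} - w_n$ and $\delta v_n := v_{n+1} - v_n$; since every iterate shares the same initial data, we have $\delta w_n(0) = \delta v_n(0) = 0$. Subtracting consecutive copies of \eqref{Eq:IterationScheme_WeakForm}, the key algebraic step is to rewrite the drift difference by adding and subtracting a mixed term:
\begin{equation*}
\begin{split}
& B_1 w_{n+1} \, \nabla J * (w_n - v_n) - B_1 w_n \, \nabla J * (w_{n-1} - v_{n-1}) \\
&\qquad = B_1 \, \delta w_n \, \nabla J * (w_n - v_n) + B_1 w_n \, \nabla J * (\delta w_{n-1} - \delta v_{n-1}),
\end{split}
\end{equation*}
and analogously for the $v$-equation. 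This isolates a term carrying the \emph{current} increment $\delta w_n$ and a term carrying the \emph{previous} increment $\delta w_{n-1} - \delta v_{n-1}$, which is exactly the structure needed for a contraction.

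Next I would test the difference equation for $\delta w_n$ with $\psi = \delta w_n$ and the one for $\delta v_n$ with $\eta = \delta v_n$, then sum. Writing $a_n(t) := \|\delta w_n(t)\|_2^2 + \|\delta v_n(t)\|_2^2$, the diffusion produces $\|\nabla \delta w_n\|_2^2 + \|\nabla \delta v_n\|_2^2$ on the left-hand side. On the right I estimate each piece separately. The term with $\nabla J * (w_n - v_n)$ is controlled in $L^\infty$ by $\|\nabla J\|_\infty(\|w_n\|_1 + \|v_n\|_1)$, which is bounded uniformly in $n$ through the mass estimate of the previous lemma, so after H\"older and Young it contributes $\epsilon \|\nabla \delta w_n\|_2^2 + C_\epsilon \, a_n$. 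The term with $w_n \, \nabla J * (\delta w_{n-1} - \delta v_{n-1})$ is bounded using $\|\nabla J * (\cdot)\|_\infty \leq \|\nabla J\|_2 \|\cdot\|_2$ together with the uniform bound on $\|w_n\|_{L^\infty(0,T;L^2)}$ from the energy estimate, giving $\epsilon \|\nabla \delta w_n\|_2^2 + C \, a_{n-1}$. Finally the $F$-difference is handled by the Lipschitz continuity of $F$ on $[0,\infty)$ (legitimate since $w_n, v_n \geq 0$, so the arguments of $F$ avoid the lone discontinuity at the origin), contributing $C(a_n + a_{n-1})$. Absorbing the gradient terms into the left-hand side yields the two-index differential inequality
\begin{equation*}
\frac{\diff}{\diff t} a_n \leq C_1 a_n + C_2 a_{n-1}, \qquad a_n(0) = 0.
\end{equation*}

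Integrating in time and applying Gr\"onwall, then passing to $M_n := \sup_{[0,t^*]} a_n$, gives $M_n \leq C_2 \, t^* e^{C_1 t^*} M_{n-1}$. Choosing $t^* > 0$ small enough that $\kappa := C_2 \, t^* e^{C_1 t^*} < 1$ makes the increments contract geometrically, $M_n \leq \kappa^n M_0$; since $\sum_n \sqrt{M_n} \leq \sqrt{M_0}\sum_n \kappa^{n/2} < \infty$ and $\|\delta w_n\|_{L^\infty(0,t^*;L^2)} \leq \sqrt{M_n}$, the increments are summable and $(w_n,v_n)$ is Cauchy in $L^\infty(0,t^*;L^2(\Omega))$. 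I expect the main obstacle to be the bookkeeping of the drift difference so that the previous increment genuinely appears at index $n-1$: this is what forces the two-index inequality above and, in turn, the smallness requirement on $t^*$. The secondary point requiring care is checking that the constants $C_1, C_2$ depend only on the uniform energy and mass bounds, the data, and $\|B_i\|_\infty, \|\nabla J\|_\infty, \|\nabla J\|_2$ — and crucially not on $n$ — so that a single choice of $t^*$ works for the whole sequence.
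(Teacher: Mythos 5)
Your proposal is correct and takes essentially the same approach as the paper: the same splitting of the consecutive-difference drift into a current-increment term and a previous-increment term, the same testing with the increments, and the same three estimates (uniform $L^\infty$ bound on $\nabla J * (w_n - v_n)$ via the mass bounds, $\|\nabla J * f\|_\infty \le \|\nabla J\|_2 \|f\|_2$ combined with the uniform energy bound on $w_n$, and Lipschitz continuity of $F$ on $[0,\infty)$ justified by nonnegativity of the iterates). The only cosmetic difference is the final step: the paper absorbs the current-increment term algebraically, arriving at the contraction factor $\tfrac{C t^*}{1 - C t^*}$, whereas you absorb it via Gr\"onwall to get the factor $C_2 t^* e^{C_1 t^*}$ and then spell out the geometric summability — both yield the Cauchy property for $t^*$ small.
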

\begin{proof}
    Let $\Bar{w}_n : = w_{n} - w_{n-1}$ and $\Bar{v}_n := v_n - v_{n-1}$. Then from \eqref{Eq:IterationScheme_WeakForm} we have that $\Bar{w}_{n+1}$ and $\Bar{v}_{n+1}$  satisfy the weak form:
\begin{equation}\label{Eq:ConsecDiff_WeakForm}
\left \lbrace
    \begin{split}
        \langle  \p_t \Bar{w}_{n+1} , \psi \rangle   +\int_\Omega \nabla \Bar{w}_{n+1} \cdot  \nabla \psi + w_{n} B_1 (\nabla J* (\Bar{w}_{n}-\Bar{v}_{n})) \cdot \nabla \psi +  \Bar{w}_{n+1} B_1 (\nabla J* (w_{n}-v_{n})) \cdot \nabla \psi \diff x \\  = \int_\Omega [F(\tfrac{1}{2}(w_n + v_n)) - F(\tfrac{1}{2}(w_{n-1} + v_{n-1}))]\psi \diff{x},\\
     \langle \p_t \Bar{v}_{n+1} , \eta \rangle  + \int_\Omega \nabla \Bar{v}_{n+1} \cdot  \nabla \eta + v_{n} B_2 (\nabla J* (\Bar{w}_{n}-\Bar{v}_{n}))\cdot \nabla \eta +  \Bar{v}_{n+1} B_2 (\nabla J* (w_{n}-v_{n})) \cdot \nabla \eta \diff x  \\ \quad = \int_\Omega [F(\tfrac{1}{2}(w_n + v_n)) - F(\tfrac{1}{2}(w_{n-1} + v_{n-1}))]\eta \diff{x}.
    \end{split} \right. 
\end{equation}
The above formulation allows us by testing with $(\psi,\eta) = (\bar w_{n+1},\bar v_{n+1})$ to arrive at the identities 
\begin{equation}\label{Eq:CauchyProof1}
    \left \lbrace
    \begin{split}
        \frac{\diff}{\diff t}\|\Bar{w}_{n+1} \|_2^2 + \| \nabla \Bar{w}_{n+1} \|_2^2 &= - \int_\Omega w_{n} B_1 (\nabla J* (\Bar{w}_{n}-\Bar{v}_{n})) \cdot \nabla \Bar{w}_{n+1}  +  \Bar{w}_{n+1} B_1 (\nabla J* (w_{n}-v_{n})) \cdot \nabla \Bar{w}_{n+1} \diff x \\
        &\qquad + \int_\Omega [F(\tfrac{1}{2}(w_n + v_n)) - F(\tfrac{1}{2}(w_{n-1} + v_{n-1}))] \Bar{w}_{n+1} \diff{x}, \\
        \frac{\diff}{\diff t}\|\Bar{v}_{n+1} \|_2^2 + \| \nabla \Bar{v}_{n+1}\|_2^2 &= - \int_ \Omega v_{n} B_2 (\nabla J* (\Bar{w}_{n}-\Bar{v}_{n}))\cdot \nabla \Bar{v}_{n+1}  +  \Bar{v}_{n+1} B_2 (\nabla J* (w_{n}-v_{n})) \cdot \nabla \Bar{v}_{n+1} \diff x \\
        &\qquad + \int_\Omega [F(\tfrac{1}{2}(w_n + v_n)) - F(\tfrac{1}{2}(w_{n-1} + v_{n-1}))] \Bar{v}_{n+1} \diff{x}.
    \end{split} \right. 
\end{equation}
Working with the right-hand side of the first equation above we have the first term,
\begin{align*}
    \int_\Omega w_{n} B_1 (\nabla J* (\Bar{w}_{n}-\Bar{v}_{n})) \cdot \nabla \Bar{w}_{n+1} \diff x &\leq \| B_1 (\nabla J* (\Bar{w}_{n}-\Bar{v}_{n})) \|_\infty \int_\Omega |w_{n} \nabla \Bar{w}_{n+1} | \diff x  \\
    &\leq C_{\Vec{B}} C_J (\|\Bar{w}_n \|_2 + \|\Bar{v}_n \|_2) \, \| w_n \|_2 \, \|\nabla \Bar{w}_{n+1} \|_2,
\end{align*}
where in the last inequality, we have used H\"older and Young's inequality for convolution. 
Making use of the weighted Young's inequality and the energy estimate on $w_n$, we can arrive at
\begin{align*}
    \int_\Omega w_{n} B_1 (\nabla J* (\Bar{w}_{n}-\Bar{v}_{n})) \cdot \nabla \Bar{w}_{n+1} \diff x &\leq C_{\Vec{B},J,F,w_0,v_0} (\|\Bar{w}_n \|_2^2 + \|\Bar{v}_n \|_2^2) + \frac{1}{2} \|\nabla \Bar{w}_{n+1} \|_2^2 .
\end{align*}

Looking now to the second term, we have via Young's inequality for convolutions, H\"older's inequality, and Young's product inequality
\begin{align*}
    \int_\Omega \Bar{w}_{n+1} B_1 (\nabla J* (w_{n}-v_{n})) \cdot \nabla \Bar{w}_{n+1} \diff x &\leq \|\nabla J* (w_{n}-v_{n})\|_\infty \int_\Omega |\Bar{w}_{n+1} \nabla \Bar{w}_{n+1}| \diff x \\
    &\leq C_{J,F,w_0,v_0}\| \Bar{w}_{n+1} \|_2^2  + \frac{1}{2} \|\nabla \Bar{w}_{n+1} \|_2^2. 
\end{align*}

Finally, from the third term on the right-hand side we see by Lipschitz continuity of $F$ (denoting the Lipschitz constant by $L_F$), 
\begin{align*}
     \int_\Omega [F(\tfrac{1}{2}(w_n + v_n)) - F(\tfrac{1}{2}(w_{n-1} + v_{n-1}))] \Bar{w}_{n+1} \diff{x} &\leq L_F \int_\Omega \tfrac{1}{2}|\Bar{w}_n + \Bar{v}_n| |\bar{w}_n| \diff{x}\\
     &\leq C_F (\|\Bar{w}_n\|_2^2 + \|\Bar{v}_n\|_2^2 + \|\Bar{w}_{n+1}\|_2^2),   
\end{align*}
where in the last step we again make use of H\"older's inequality and Young's inequality.

Repeating these calculations for $\Bar{v}_{n+1}$ and summing the equations in \eqref{Eq:CauchyProof1}, we arrive at the estimates
\begin{equation}\label{Eq:DiffEstimate}
    \frac{\diff}{\diff t} \| \Bar{w}_{n+1} \|_2^2 + \frac{\diff}{\diff t} \| \Bar{v}_{n+1} \|_2^2 \leq C_{\Vec{B},J,F,w_0,v_0} (\| \Bar{w}_{n} \|_2^2 + \| \Bar{v}_{n} \|_2^2+ \| \Bar{w}_{n+1} \|_2^2 +\| \Bar{v}_{n+1} \|_2^2 ).
\end{equation}
Choosing $t^*$ such that 
\begin{equation*}
    \frac{C_{\Vec{B},J,F,w_0,v_0}t^*}{1-C_{\Vec{B},J,F,w_0,v_0}t^*} < 1
\end{equation*}
and integrating \eqref{Eq:DiffEstimate} from 0 to $t <t^*$ we can arrive at the estimate
\begin{equation*}
    \|\Bar{w}_{n+1} (t,\cdot) \|_2^2 + \|\Bar{v}_{n+1} (t,\cdot) \|_2^2 \leq C_{\Vec{B},J,F,w_0,v_0} \left( \int_0^t \|\Bar{w}_{n} (s,\cdot) \|_2^2 + \|\Bar{v}_{n} (s,\cdot) \|_2^2 \diff s+\int_0^t \|\Bar{w}_{n+1} (s,\cdot) \|_2^2  + \|\Bar{v}_{n+1} (s,\cdot) \|_2^2 \diff s \right).
\end{equation*}
Taking the supremum of both sides over $[0,t^*)$, we obtain:
\begin{align*}
    \|\Bar{w}_{n+1}  \|^2_{L^\infty(0,t^*;L^2(\Omega))} + \|\Bar{v}_{n+1} 
    \|_{L^\infty(0,t^*;L^2(\Omega))}^2 &\leq C_{\Vec{B},J,F,w_0,v_0} \left(  \|\Bar{w}_{n}   \|_{L^2(0,t^*;L^2(\Omega))}^2 + \|\Bar{v}_{n} \|_{L^2(0,t^*;L^2(\Omega))}^2 \right. \\
    &\quad+\left. \|\Bar{w}_{n+1} \|_{L^2(0,t^*;L^2(\Omega))}^2  + \|\Bar{v}_{n+1}  \|_{L^2(0,t^*;L^2(\Omega))}^2 \right) \\
    &\leq C_{\Vec{B},J,F,w_0,v_0} t^* \left(  \|\Bar{w}_{n}   \|_{L^\infty(0,t^*;L^2(\Omega))}^2 + \|\Bar{v}_{n} \|_{L^\infty(0,t^*;L^2(\Omega))}^2 \right. \\
    &\quad+\left. \|\Bar{w}_{n+1} \|_{L^\infty(0,t^*;L^2(\Omega))}^2  + \|\Bar{v}_{n+1}  \|_{L^\infty(0,t^*;L^2(\Omega))}^2 \right),
\end{align*}
where we have used the inequality 
\begin{equation}\label{Eq:Linfty2_inequality}
    \| u\|_{L^2(0,t^*;L^2(\Omega))}^2 \leq t^* \| u\|_{L^\infty(0,t^*;L^2(\Omega))}^2.
\end{equation}
Manipulation of the above inequality yields
\begin{equation}
    \|\Bar{w}_{n+1}  \|^2_{L^\infty(0,t^*;L^2(\Omega))} + \|\Bar{v}_{n+1} 
    \|_{L^\infty(0,t^*;L^2(\Omega))}^2 \leq \frac{C_{\Vec{B},J,F,w_0,v_0} t^*}{1-C_{\Vec{B},J,F,w_0,v_0} t^*} \left(  \|\Bar{w}_{n}   \|_{L^\infty(0,t^*;L^2(\Omega))}^2 + \|\Bar{v}_{n} \|_{L^\infty(0,t^*;L^2(\Omega))}^2 \right),
\end{equation}
which implies the sequence is Cauchy in $L^\infty(0,t^*;L^2(\Omega))$. Moreover, using inequality \eqref{Eq:Linfty2_inequality}, we see this implies the sequence is also Cauchy in $L^2(0,t^*;L^2(\Omega))$.
\end{proof}

\begin{remark*}
    The uniqueness of solutions to \eqref{Eq:AuxMain} follows from similar arguments presented in the above proof. Indeed, letting $\Bar{w} = w_1 - w_2$ and $\Bar{v} = v_1 - v_2$ where $(w_1,v_1)$ and $(w_2,v_2)$ are solutions to \eqref{Eq:AuxMain}, we see that $\Bar{w}$ and $\Bar{v}$ satisfy similar equations to \eqref{Eq:ConsecDiff_WeakForm}, namely
    \begin{equation*}
\left \lbrace
    \begin{split}
             \langle  \p_t \Bar{w} , \psi \rangle  + \int_\Omega \nabla \Bar{w} \cdot  \nabla \psi + w_{2} B_1 (\nabla J* (\Bar{w}-\Bar{v})) \cdot \nabla \psi+  \Bar{w} B_1 (\nabla J* (w_{1}-v_{1})) \cdot \nabla \psi \diff x \\
              = \int_\Omega [F(\tfrac{1}{2}(w_1 + v_1)) - F(\tfrac{1}{2}(w_{2} + v_{2}))]\psi \diff{x},\\
     \langle \p_t \Bar{v} , \eta \rangle  + \int_\Omega \nabla \Bar{v} \cdot  \nabla \eta + v_{2} B_2 (\nabla J* (\Bar{w}-\Bar{v}))\cdot \nabla \eta +  \Bar{v} B_2 (\nabla J* (w_{1}-v_{1})) \cdot \nabla \eta \diff x  \\
              = \int_\Omega [F(\tfrac{1}{2}(w_1 + v_1)) - F(\tfrac{1}{2}(w_{2} + v_{2}))]\eta \diff{x}.
    \end{split} \right. 
\end{equation*}Following the calculations which lead to equation \eqref{Eq:DiffEstimate}, we arrive at the differential inequality
    \[ \frac{\diff}{\diff t} \| \Bar{w} \|_2^2 + \frac{\diff}{\diff t} \| \Bar{v} \|_2^2 \leq C_{\Vec{B},J,F,w_0,v_0} (\| \Bar{w} \|_2^2 + \| \Bar{v} \|_2^2).\]
    A standard application of Gr\"onwall's inequality implies $\| \Bar{w} \|_2^2 +  \| \Bar{v} \|_2^2 = 0$ for all $t \in [0,T]$.
\end{remark*}

\begin{remark*}
    By continuity and the above energy estimates, we can extend the solution to any finite $T$ using standard methods (see, e.g., \cite[Section 7]{amann2011ordinary}).
\end{remark*}

Summing up, we have obtained that the constructed sequence $(w_n,v_n)$ is in fact a  Cauchy sequence in $L^\infty(0,T;L^2(\Omega))^2 \text{ (or }L^2(Q_T)^2)$ and, from the energy estimates, is weakly convergent (along a subsequence) in $L^2(0,T;H^1_\sharp(\Omega))$ with $(\p_t w_n, \p_t v_n)$ weakly convergent (along a subesquence) in $L^2(0,T;H^{-1}_\sharp(\Omega))$. Using these results, we can pass to the limit in the weak formulation \eqref{Eq:IterationScheme_WeakForm} (following the calculations used to derive equation \eqref{Eq:DiffEstimate}) as $n \longrightarrow \infty $ to arrive at the weak formulation of Definition \ref{Def:AuxSol}. Thus, the limit is a solution to \eqref{Eq:AuxMain}.

\section{The fixed-point argument}\label{Sec:FixedPoint}
We are now ready to prove the well-posedness of equation \eqref{Eq:SumDiff_System} which for the reader's convenience we rewrite here:
\begin{equation}
\left  \lbrace
\begin{split}
    \p_t w &= \Delta w + \text{div}[\beta (m-1) (\nabla J* (w-v)) w] + F(\tfrac{1}{2}(w+v)), \\
    \p_t v &= \Delta v + \text{div}[\beta(m+1) (\nabla J*(w-v)) v]+ F(\tfrac{1}{2}(w+v)), \\ 
    m  &= \frac{1}{2}(w-v),
\end{split} \right.   \quad (t,x) \in Q_T. \tag{\ref{Eq:SumDiff_System}}
\end{equation}
Taking into account  assumption \eqref{Assump:WV_data}, solutions to \eqref{Eq:SumDiff_System} are understood as in Definition \ref{Def:AuxSol} with $B_1 := \beta (m-1)$ and $B_2 := \beta (m+1)$. We aim to show the existence of solutions to  a fixed-point to the map $\mathcal{T}: \mathcal{Z} \longrightarrow \mathcal{Z}$ given by 
\[\bar{m} \xmapsto{\mathcal{T}_1} (\bar{w},\bar{v}) \xmapsto{\mathcal{T}_2} \frac{1}{2}(\bar{w}-\bar{v}),\]
where 
\[\mathcal{Z} := \{ u \in L^2(Q_T) : -1 \leq u \leq 1 \text{ almost everywhere in } Q_T\} ,\]

$\mathcal{T}_1(\bar m) := (\bar w, \bar v)$ is the weak solution to \eqref{Eq:AuxMain} with $B_1 = \beta (\bar m -1)$ and $B_2 = \beta (\bar m + 1)$, and $\mathcal{T}_2(\bar w, \bar v) = \frac{1}{2}(\bar w - \bar v)$.

Now, using the work done in the previous sections, we have the following lemmas:

\begin{lemma}\label{Lem:T_Well-defined}
    The map $\mathcal{T} = \mathcal{T}_2 \circ \mathcal{T}_1 $ is well-defined over the set $\mathcal{Z}$ and $\mathcal{T}:\mathcal{Z} \longrightarrow \mathcal{Z}$.  
\end{lemma}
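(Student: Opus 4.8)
The plan is to treat the two constituent maps separately, establishing well-definedness first and then the invariance $\mathcal{T}(\mathcal{Z})\subseteq\mathcal{Z}$. For well-definedness of $\mathcal{T}_1$: given any $\bar m \in \mathcal{Z}$, the coefficients $B_1 := \beta(\bar m - 1)$ and $B_2 := \beta(\bar m + 1)$ satisfy $\|B_1\|_{L^\infty(Q_T)}, \|B_2\|_{L^\infty(Q_T)} \leq 2\beta$, since $-1 \leq \bar m \leq 1$ almost everywhere. Thus the hypotheses of Theorem \ref{Thrm:AuxExistence} are met, and there is a unique nonnegative weak solution $(\bar w, \bar v) \in \mathcal{X}^2$ to \eqref{Eq:AuxMain}, enjoying the energy estimates recorded there; in particular $\bar w, \bar v \in L^\infty(0,T; L^2(\Omega))$. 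Setting $m := \mathcal{T}_2(\bar w, \bar v) = \tfrac{1}{2}(\bar w - \bar v)$, which lies in $L^2(0,T; H^1_\sharp(\Omega)) \subset L^2(Q_T)$, shows that the composite $\mathcal{T} = \mathcal{T}_2 \circ \mathcal{T}_1$ is well-defined on $\mathcal{Z}$.

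It remains to prove the invariance, i.e.\ that $-1 \leq m \leq 1$ almost everywhere. The crucial structural observation is that the source term $F(\tfrac{1}{2}(\bar w + \bar v))$ appears identically in both equations of \eqref{Eq:AuxMain}, so it cancels under subtraction. Subtracting the $\bar v$-equation from the $\bar w$-equation, dividing by two, and using $\bar w - \bar v = 2m$ so that $\nabla J * (\bar w - \bar v) = 2\,\nabla J * m$, I would obtain that $m$ is a weak solution of the scalar equation
\[ \p_t m = \Delta m + \text{div}\big[(B_1 \bar w - B_2 \bar v)\, \nabla J * m\big], \quad (t,x) \in Q_T, \]
equipped with periodic boundary conditions and initial datum $m_0 = \tfrac{1}{2}(w_0 - v_0)$. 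This is precisely of the form treated in Proposition \ref{Prop:NonLocal_Bounded}, with scalar coefficient $B := B_1 \bar w - B_2 \bar v$.

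To apply that proposition, I would verify its two hypotheses. For the coefficient, H\"older's inequality together with $\|B_i\|_{L^\infty(Q_T)} \leq 2\beta$ gives $\|B\|_{L^2(\Omega)} \leq 2\beta(\|\bar w\|_{L^2(\Omega)} + \|\bar v\|_{L^2(\Omega)})$ for almost every $t$, so that $\|B\|_{L^\infty(0,T;L^2(\Omega))} \leq C_B$ follows from the $L^\infty(0,T;L^2(\Omega))$ energy bounds of Theorem \ref{Thrm:AuxExistence}. For the initial datum, assumption \eqref{Assump:WV_data} gives $0 \leq w_0, v_0 \leq 2$, whence $|m_0| = \tfrac{1}{2}|w_0 - v_0| \leq 1$ almost everywhere. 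Proposition \ref{Prop:NonLocal_Bounded} applied with $C = 1$ then yields $|m| \leq 1$ almost everywhere in $Q_T$, i.e.\ $m \in \mathcal{Z}$, which completes the argument.

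The main obstacle, and the only step requiring genuine care, is the rigorous passage to the reduced scalar equation for $m$: one must confirm that the difference of the two weak formulations of Definition \ref{Def:AuxSol} is exactly the weak formulation underlying Proposition \ref{Prop:NonLocal_Bounded}, with admissible test functions in $L^2(0,T; H^1_\sharp(\Omega))$ and with the regularity $m \in \mathcal{W}$ inherited linearly from $(\bar w, \bar v)$. Everything else reduces to the cancellation of $F$ and to the \textit{a priori} estimates already established.
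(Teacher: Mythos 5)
Your proposal is correct and follows essentially the same route as the paper: well-definedness of $\mathcal{T}_1$ via Theorem \ref{Thrm:AuxExistence}, then subtracting the two equations (with the $F$ terms cancelling) to get a scalar non-local equation for $m = \tfrac{1}{2}(\bar w - \bar v)$ whose coefficient $B_1\bar w - B_2\bar v = -\beta(\bar w + \bar v - \bar m(\bar w - \bar v))$ lies in $L^\infty(0,T;L^2(\Omega))$, so that Proposition \ref{Prop:NonLocal_Bounded} with $|m_0|\leq 1$ gives $|m|\leq 1$. If anything, your verification of the hypotheses (the bound on $\|B\|_{L^\infty(0,T;L^2(\Omega))}$ and the reduction of the weak formulations) is spelled out more explicitly than in the paper's own proof.
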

\begin{proof}
We study separately the maps $\mathcal{T}_1$ and $\mathcal{T}_2$.  

\subsubsection*{The map $\mathcal{T}_1$ :} Fix $\bar m \in \mathcal{Z}$. By Theorem \ref{Thrm:AuxExistence}, we have the existence of a unique pair 
$$(\bar w , \bar v) \in \mathcal{X}^2 \text{ with } (\p_t \bar w , \p_t \bar v) \in \left( L^2(0,T;H^{-1}_\sharp(\Omega)) \right)^2$$ 
such that $0 \leq \bar w , \bar v$. Thus the map $\mathcal{T}_1$ is well-defined.

\subsubsection*{The map $\mathcal{T}_2$ :} The map $\mathcal{T}_2$ is trivially well-defined. What remains to be shown is that $\mathcal{T}_2$ maps into $\mathcal{Z}$. To see this, define $m := \frac{1}{2}(\bar w-\bar v) \in L^2(Q_T) $ and notice that
\begin{equation}
    \p_t m = \Delta m - \text{div} [\beta (\bar w+\bar v -\bar m (\bar w-\bar v)) \nabla J*m],
\end{equation}
with $|m_0| \leq 1.$ Since $\bar m \in L^\infty(Q_T)$ and $\bar w , \bar v \in  L^2(0,T;H^1_\sharp(\Omega)) \cap L^\infty(0,T;L^2(\Omega))$, we have $\beta (\bar w+\bar v -\bar m (\bar w-\bar v)) =: B(t,x) \in L^\infty(0,T;L^2(\Omega))$ . Thus, by Proposition \ref{Prop:NonLocal_Bounded}, we have that $|m| \leq 1$.     
\end{proof}

\begin{remark}\label{Rem:wv_Bounded}
    With similar arguments to those found in Proposition \ref{Prop:ConBound}, it can be easily shown the function $\phi := \frac{1}{2}(\bar w + \bar v)$, which satisfies the equation\[ \p_t \phi = \Delta \phi - \text{div} [2\beta \Bar{m} (1-\phi) \nabla J*m] + F(\phi), \]
    is nonnegative and bounded by 1 (i.e.,$0\leq \phi \leq 1$) since $\Bar{m}, m \in \mathcal{Z}$ and $0\leq \bar w, \bar v$.
    Together, these bounds allow us to conclude $0\leq \bar w, \bar v \leq 2$ almost everywhere in $Q_T$.
\end{remark}

\begin{lemma}\label{Lem:T_Continuous}
    The map $\mathcal{T} = \mathcal{T}_2 \circ \mathcal{T}_1 : \mathcal{Z} \longrightarrow \mathcal{Z}$ is continuous.
\end{lemma}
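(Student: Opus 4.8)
The plan is to reduce the continuity of $\mathcal{T}$ to that of $\mathcal{T}_1$, since $\mathcal{T}_2 : (\bar w, \bar v) \mapsto \tfrac12(\bar w - \bar v)$ is a bounded linear map and hence Lipschitz from $L^2(Q_T)^2$ to $L^2(Q_T)$. Thus it suffices to show that if $\bar m_k \to \bar m$ strongly in $L^2(Q_T)$ with $\bar m_k, \bar m \in \mathcal{Z}$, then the associated solutions $(\bar w_k, \bar v_k) := \mathcal{T}_1(\bar m_k)$ converge to $(\bar w, \bar v) := \mathcal{T}_1(\bar m)$ in $L^2(Q_T)^2$; I would in fact prove the stronger convergence in $L^\infty(0,T;L^2(\Omega))^2$. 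Throughout I rely on the uniform bounds available for these solutions: since each $\bar m_k \in \mathcal{Z}$, Remark \ref{Rem:wv_Bounded} gives $0 \le \bar w_k, \bar v_k \le 2$ almost everywhere, and since $|\bar m_k| \le 1$ the coefficients $\beta(\bar m_k - 1)$ and $\beta(\bar m_k + 1)$ are bounded in $L^\infty(Q_T)$ uniformly in $k$; consequently the energy estimates of Theorem \ref{Thrm:AuxExistence} and the convolution bound $\|\nabla J * (\bar w_k - \bar v_k)\|_\infty \le \|\nabla J\|_2 \|\bar w_k - \bar v_k\|_2 \le C$ hold uniformly in $k$.

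First I would set $W_k := \bar w_k - \bar w$ and $V_k := \bar v_k - \bar v$, which vanish at $t = 0$ and satisfy the weak form of the difference of the two copies of \eqref{Eq:AuxMain}. The only genuinely new feature compared with the uniqueness computation following Lemma \ref{Lem:Cauchy} is that the two solutions solve the system with \emph{different} drift coefficients. Accordingly, I would split the drift difference in the $w$-equation into three pieces by adding and subtracting intermediate terms:
\[
\beta(\bar m_k - \bar m)\,(\nabla J * (\bar w_k - \bar v_k))\,\bar w_k
\;+\; \beta(\bar m - 1)\,(\nabla J * (W_k - V_k))\,\bar w_k
\;+\; \beta(\bar m - 1)\,(\nabla J * (\bar w - \bar v))\,W_k ,
\]
and similarly for the $v$-equation. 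Testing the difference equations with $(W_k, V_k)$ and summing, the second and third pieces, together with the Lipschitz bound $|F(\tfrac12(\bar w_k+\bar v_k)) - F(\tfrac12(\bar w+\bar v))| \le \tfrac{L_F}{2}(|W_k|+|V_k|)$, are handled exactly as in the derivation of \eqref{Eq:DiffEstimate}: after H\"older, Young's convolution inequality, and weighted Young's inequality they contribute a term $C(\|W_k\|_2^2 + \|V_k\|_2^2)$ plus gradient terms absorbed into $\|\nabla W_k\|_2^2 + \|\nabla V_k\|_2^2$.

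The crux, and the step I expect to be the main obstacle, is the first piece, which carries the dependence on $\bar m_k - \bar m$. Here I would use the pointwise bound $\bar w_k \le 2$ and the uniform convolution bound to estimate
\[
\left| \int_\Omega \beta(\bar m_k - \bar m)\,(\nabla J * (\bar w_k - \bar v_k))\,\bar w_k \cdot \nabla W_k \diff x \right|
\le C \,\|\bar m_k - \bar m\|_2 \,\|\nabla W_k\|_2
\le C\,\|\bar m_k - \bar m\|_2^2 + \tfrac14 \|\nabla W_k\|_2^2 ,
\]
so that the gradient is again absorbed and there remains a source term $R_k(t) := C\,\|\bar m_k(t,\cdot) - \bar m(t,\cdot)\|_{L^2(\Omega)}^2$. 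It is precisely at this point that strong (not merely weak) $L^2(Q_T)$-convergence of $\bar m_k$ is indispensable, and that the uniform $L^\infty$ bound on $\bar w_k$ from Remark \ref{Rem:wv_Bounded} is used to convert the smallness of $\bar m_k - \bar m$ into smallness of the source.

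Collecting everything yields a differential inequality of the form
\[
\frac{\diff}{\diff t}\big(\|W_k\|_2^2 + \|V_k\|_2^2\big) \le C_{\beta,J,F}\big(\|W_k\|_2^2 + \|V_k\|_2^2\big) + R_k(t) .
\]
Since $W_k(0) = V_k(0) = 0$ and $\int_0^T R_k(s)\diff s = C\|\bar m_k - \bar m\|_{L^2(Q_T)}^2$, Gr\"onwall's inequality gives
\[
\sup_{t \in [0,T]}\big(\|W_k(t)\|_2^2 + \|V_k(t)\|_2^2\big) \le C\, e^{C_{\beta,J,F} T}\,\|\bar m_k - \bar m\|_{L^2(Q_T)}^2 \xrightarrow[k\to\infty]{} 0 .
\]
Note that, in contrast to Lemma \ref{Lem:Cauchy}, no restriction to a short time $t^*$ is needed: both pairs solve the full system at the same level, so the right-hand side depends only on $(W_k,V_k)$ themselves and Gr\"onwall closes the estimate directly on all of $[0,T]$. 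This establishes $(\bar w_k, \bar v_k) \to (\bar w, \bar v)$ in $L^\infty(0,T;L^2(\Omega))^2$, hence in $L^2(Q_T)^2$, and applying $\mathcal{T}_2$ gives $\mathcal{T}(\bar m_k) \to \mathcal{T}(\bar m)$ in $L^2(Q_T)$, proving continuity. Alternatively, one could argue by soft compactness: the uniform estimates of Theorem \ref{Thrm:AuxExistence} together with an Aubin--Lions argument yield a strongly $L^2(Q_T)$-convergent subsequence whose limit is identified as $\mathcal{T}_1(\bar m)$ by passing to the limit in the weak form, the uniqueness in Theorem \ref{Thrm:AuxExistence} then upgrading this to convergence of the full sequence.
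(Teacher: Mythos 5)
Your proof is correct and follows essentially the same route as the paper's: the same add-and-subtract splitting of the drift difference into three pieces (isolating the coefficient difference $\bar m_k - \bar m$ in a single term controlled via the uniform bounds $0 \le \bar w_k, \bar v_k \le 2$ of Remark \ref{Rem:wv_Bounded} and Young's convolution inequality), the same energy estimates using H\"older, weighted Young, and the Lipschitz continuity of $F$, and the same Gr\"onwall closure valid on all of $[0,T]$ without the short-time restriction of Lemma \ref{Lem:Cauchy}. The only cosmetic differences are which factors you pair in the telescoping and that you state the conclusion in $L^\infty(0,T;L^2(\Omega))$ rather than only $L^2(Q_T)$.
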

\begin{proof}
    Let $\bar m_n , \bar m \in \mathcal{Z}$ be such that $ \bar m_n \longrightarrow \bar  m$ in $L^2(Q_T)$. Define also $(\bar w_n, \bar v_n) := \mathcal{T}_1 (m_n)$ and $(\bar w, \bar v) := \mathcal{T}_1(m).$ We want to show $(\bar w_n, \bar v_n) \longrightarrow ( \bar w,\bar v)$ in $L^2(Q_T).$ To this end, let $m : =  \bar m_n - \bar m$, $w : = \bar w_n -\bar w $, and $v : = \bar v_n -\bar v $ for fixed $n$. Following similar manipulations which gave rise to equation \eqref{Eq:ConsecDiff_WeakForm}, we have that $w$ and $v$ satisfy
    \begin{equation}
    \left \lbrace
    \begin{split}
             \langle  \p_t w , \psi \rangle   + \int_\Omega \nabla w \cdot  \nabla \psi \diff x &= - \int_\Omega \beta \bar w_n (\bar m_n -1) (\nabla J* (w-v)) \cdot \nabla \psi+  w (\bar m_n -1) (\nabla J* (\Bar{w}-\Bar{v}))\cdot \nabla \psi \diff x \\
             &\qquad- \int_\Omega \bar w\, m (\nabla J* (\Bar{w}-\Bar{v}))  \cdot \nabla \psi \diff x  +\int_\Omega [F(\tfrac{1}{2}(\Bar{w}_n+\Bar{v}_n) - F(\tfrac{1}{2}(\Bar{w}+\Bar{v}) )]\psi \diff{x}, \\
     \langle  \p_t v , \eta \rangle  + \int_\Omega \nabla v \cdot  \nabla \eta \diff x  &= -\int_\Omega \beta  \bar v_n (\bar m_n +1) (\nabla J* (w-v))\cdot \nabla \eta  +  v (\bar m_n +1) (\nabla J* (\Bar{w}-\Bar{v})) \cdot \nabla \eta  \diff x\\
     &\qquad - \int_\Omega \bar v\, m (\nabla J* (\Bar{w}-\Bar{v})) \cdot \nabla \eta \diff x +\int_\Omega [F(\tfrac{1}{2}(\Bar{w}_n+\Bar{v}_n) - F(\tfrac{1}{2}(\Bar{w}+\Bar{v}))] \eta \diff{x}.
    \end{split} \right. 
    \end{equation}
    Testing with $(\psi, \eta) = (w,v)$ we have the following equations:
    \begin{equation}
        \left \lbrace
        \begin{split}
            \frac{\diff}{\diff t}\|w \|_2^2 + \| \nabla w \|_2^2 &= - \int_\Omega \beta  \bar w_n (\bar m_n -1) (\nabla J* (w-v)) \cdot \nabla w +w (\bar m_n -1) (\nabla J* (\Bar{w}-\Bar{v}))\cdot \nabla w \diff x \\ 
            &\qquad -\int_\Omega    \bar w\, m (\nabla J* (\Bar{w}-\Bar{v}))  \cdot \nabla w \diff x +\int_\Omega [F(\tfrac{1}{2}(\Bar{w}_n+\Bar{v}_n) - F(\tfrac{1}{2}(\Bar{w}+\Bar{v}) )]w \diff{x},\\      
            \frac{\diff}{\diff t}\|v \|_2^2 + \| \nabla v\|_2^2 &= - \int_ \Omega \beta  \bar v_n (\bar m_n +1) (\nabla J* (w-v))\cdot \nabla v +  v (\bar m_n +1) (\nabla J* (\Bar{w}-\Bar{v})) \cdot \nabla v \diff x  \\
            &\qquad - \int_\Omega \bar v\, m (\nabla J* (\Bar{w}-\Bar{v})) \cdot \nabla v \diff x +\int_\Omega [F(\tfrac{1}{2}(\Bar{w}_n+\Bar{v}_n) - F(\tfrac{1}{2}(\Bar{w}+\Bar{v}) )]v \diff{x}.
        \end{split} \right. 
    \end{equation}
    Making use of Remark \ref{Rem:wv_Bounded}, the first two terms on the right-hand side of both equations can be handled as in the proof of Lemma \ref{Lem:Cauchy} to arrive at the bounds:
   \begin{equation}
        \left \lbrace
        \begin{split}
             \left| \int_\Omega \beta \left[  \bar w_n (\bar m_n -1) (\nabla J* (w-v)) +  w (\bar m_n -1) (\nabla J* (\Bar{w}-\Bar{v}))  \right] \cdot \nabla w \diff x  \right|& \leq C_{J,F} (\|w\|_2^2 + \|v\|_2^2) + \frac{1}{3} \|\nabla w\|_2^2,\\
             \left| \int_ \Omega \beta \left[  \bar v_n (\bar m_n +1) (\nabla J* (w-v)) +  v (\bar m_n +1) (\nabla J* (\Bar{w}-\Bar{v}))  \right] \cdot \nabla v\diff x  \right| &\leq C_{J,F} (\|w\|_2^2 + \|v\|_2^2) + \frac{1}{3} \|\nabla v\|_2^2.
        \end{split} \right.
    \end{equation} 
    Similarly, the evaporation terms follow the same arguments,
   \begin{equation}
        \left \lbrace
        \begin{split}
             \left| \int_\Omega [F(\tfrac{1}{2}(\Bar{w}_n+\Bar{v}_n) - F(\tfrac{1}{2}(\Bar{w}+\Bar{v}) )]w\diff x  \right|& \leq C_F (\|w\|_2^2 + \|v\|_2^2 ),\\
             \left| \int_\Omega [F(\tfrac{1}{2}(\Bar{w}_n+\Bar{v}_n) - F(\tfrac{1}{2}(\Bar{w}+\Bar{v}) )]v\diff x  \right|& \leq C_F (\|w\|_2^2 + \|v\|_2^2 ).
        \end{split} \right. 
    \end{equation} 

    Meanwhile, the new third term yields the bound

        \begin{equation}
        \left \lbrace
        \begin{split}
             \left| \int_\Omega \bar w\, m (\nabla J* (\Bar{w}-\Bar{v})) \cdot \nabla w  \diff x  \right|& \leq C_{J,F} \|m\|_2^2 + \frac{1}{3} \|\nabla w\|_2^2,\\
             \left| \int_ \Omega \beta \bar v\, m (\nabla J* (\Bar{w}-\Bar{v}))\cdot \nabla v\diff x  \right| &\leq C_{J,F} \|m\|_2^2  + \frac{1}{3} \|\nabla v\|_2^2.
        \end{split} \right. 
    \end{equation}
    Summing up the two inequalities and maximizing constants leaves us with 
    \begin{equation}\label{Ineq:Lemma32proof}
        \frac{\diff}{\diff t}\|w \|_2^2 +\frac{\diff}{\diff t}\|v \|_2^2 \leq C_{J,F}( \|m\|_2^2 +\|w\|_2^2 +\|v\|_2^2).
    \end{equation}
    Integrating the above equation from $0$ to $t$ gives us the integral inequality 
    \begin{equation}
        \|w \|_2^2 +\|v \|_2^2 \leq C_{J,F} \|m\|_{L^2((0,t)\times \Omega)}^2 +C_{J,F}\int_0^t \|w\|_2^2 +\|v\|_2^2\diff s.
    \end{equation}
    Gr\"onwall's inequality and integration provides us with the estimate:
    \begin{equation}
        \|w \|_{L^2(Q_T)}^2 +\|v \|_{L^2(Q_T)}^2 \leq C_{J,F} T \|m\|_{L^2(Q_T)}^2 \exp{(C_{J,F} T)}.
    \end{equation}
    Continuity of $\mathcal{T}_1$ and therefore $\mathcal{T}$ follows.
\end{proof}

\begin{theorem}
    The map $\mathcal{T} := \mathcal{T}_2 \circ \mathcal{T}_1$ has at least one fixed-point  $m^* \in \mathcal{W} \cap \mathcal{Z}.$
\end{theorem}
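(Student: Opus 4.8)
The plan is to apply Schauder's fixed-point theorem to $\mathcal{T}$ regarded as a self-map of the set $\mathcal{Z}$, viewed as a subset of the Banach space $L^2(Q_T)$. Three ingredients are required: that $\mathcal{Z}$ is a nonempty, closed, convex subset of $L^2(Q_T)$; that $\mathcal{T}$ is continuous on $\mathcal{Z}$; and that the image $\mathcal{T}(\mathcal{Z})$ is relatively compact in $L^2(Q_T)$. The first is elementary: $\mathcal{Z}$ contains the zero function, the pointwise constraint $-1 \leq u \leq 1$ is stable under convex combinations, and it passes to $L^2$-limits (along an almost-everywhere convergent subsequence), so $\mathcal{Z}$ is both closed and convex. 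The continuity is precisely the content of Lemma \ref{Lem:T_Continuous}. The remaining self-mapping property $\mathcal{T}:\mathcal{Z}\to\mathcal{Z}$ is supplied by Lemma \ref{Lem:T_Well-defined}.

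The relative compactness of $\mathcal{T}(\mathcal{Z})$ is the crux of the argument. First I would observe that the energy estimates of Theorem \ref{Thrm:AuxExistence} are \emph{uniform} over $\bar m \in \mathcal{Z}$: the constant $C_{\Vec{B},J}$ enters through the drift only via $\|B_1\|_{L^\infty(Q_T)}$ and $\|B_2\|_{L^\infty(Q_T)}$, and since $\bar m \in \mathcal{Z}$ forces $|\bar m| \leq 1$, we have $\|B_i\|_{L^\infty(Q_T)} = \beta\|\bar m \pm 1\|_{L^\infty(Q_T)} \leq 2\beta$ for $i=1,2$. Consequently the solution pair $(\bar w,\bar v) = \mathcal{T}_1(\bar m)$ is bounded, independently of $\bar m$, in $L^2(0,T;H^1_\sharp(\Omega))$ with $(\p_t \bar w,\p_t \bar v)$ bounded in $L^2(0,T;H^{-1}_\sharp(\Omega))$; that is, $(\bar w,\bar v)$ ranges over a bounded subset of $\mathcal{W}^2$. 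Since $\mathcal{T}_2$ acts as the fixed linear combination $\tfrac12(\bar w-\bar v)$, the whole image $\mathcal{T}(\mathcal{Z})$ lies in a bounded subset of $\mathcal{W}$.

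I would then invoke the Aubin--Lions--Simon compactness lemma, using the compact embedding $H^1_\sharp(\Omega) \hookrightarrow\hookrightarrow L^2(\Omega)$ together with the continuous embedding $L^2(\Omega) \hookrightarrow H^{-1}_\sharp(\Omega)$, to conclude that bounded subsets of $\mathcal{W}$ are relatively compact in $L^2(0,T;L^2(\Omega)) = L^2(Q_T)$. Hence $\mathcal{T}(\mathcal{Z})$ is relatively compact. With the three ingredients in hand, Schauder's theorem yields a fixed point $m^* \in \mathcal{Z}$ with $\mathcal{T}(m^*) = m^*$. Finally, because $m^*$ lies in the image of $\mathcal{T}$, and that image consists of functions $\tfrac12(\bar w - \bar v)$ with $\bar w,\bar v \in \mathcal{W}$, linearity gives $m^* \in \mathcal{W}$, so indeed $m^* \in \mathcal{W}\cap\mathcal{Z}$.

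I expect the main obstacle to be establishing the uniformity of the bounds over $\mathcal{Z}$ --- that is, tracking the dependence of $C_{\Vec{B},J}$ carefully enough to see that it is controlled by $\beta$ alone through $\|B_i\|_{L^\infty(Q_T)} \leq 2\beta$ --- and the clean invocation of the Aubin--Lions--Simon lemma in the periodic functional setting. By contrast, the closedness and convexity of $\mathcal{Z}$, and the continuity and self-mapping properties already provided by Lemmas \ref{Lem:T_Well-defined} and \ref{Lem:T_Continuous}, are comparatively routine.
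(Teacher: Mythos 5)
Your proof is correct and follows essentially the same route as the paper: Schauder's fixed-point theorem on the nonempty closed convex set $\mathcal{Z} \subset L^2(Q_T)$, with the self-mapping and continuity supplied by Lemmas \ref{Lem:T_Well-defined} and \ref{Lem:T_Continuous} and compactness via the Aubin--Lions lemma. If anything, you are more careful than the paper on the compactness step: the paper only asserts $\mathcal{T}(\mathcal{Z}) \subseteq \mathcal{W}$, whereas you correctly observe that one needs the image to be \emph{bounded} in $\mathcal{W}$, which indeed follows from the uniformity of the energy estimates of Theorem \ref{Thrm:AuxExistence} over $\mathcal{Z}$, since $\|B_i\|_{L^\infty(Q_T)} \leq 2\beta$ for every $\bar m \in \mathcal{Z}$.
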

\begin{proof}
    Summing up the results from before, we have the set $\mathcal{Z}$ is a non-empty, closed, and convex subset of $L^2(Q_T)$, the map $\mathcal{T}: \mathcal{Z} \longrightarrow \mathcal{Z}$ is well-defined and continuous (Lemmas \ref{Lem:T_Well-defined} and  \ref{Lem:T_Continuous}), and $\mathcal{T}(\mathcal{Z}) \subseteq \mathcal{W}$ which is compactly embedded in $L^2(Q_T)$ via Lions-Aubin's Lemma. Therefore, we can employ Schauder's Fixed-Point Theorem to conclude there exists a fixed-point in $\mathcal{W} \cap \mathcal{Z}.$
\end{proof}

Summing up, we have shown the existence of bounded solutions $(w,v)$ to problem \eqref{Eq:SumDiff_System}. By manipulation, we can see that the functions $m = \frac{1}{2}(w-v)$ and $\phi = \frac{1}{2}(w+v)$ are solutions to equation \eqref{Eq:MainModel}. Moreover, we have shown that $0 \leq w,v \leq 2$ which implies that 
\[ |m| \leq \phi .\]
Since in the fixed-point argument we show that $m \in \mathcal{Z}$, we can conclude via Proposition \ref{Prop:ConBound} that $\phi \leq 1$. Thus we have shown the existence of bounded solutions to equation \eqref{Eq:MainModel} in the sense of Definition \ref{Def:MainSol}. A proof of the uniqueness of bounded solutions to system \eqref{Eq:MainModel_withoutEvap} can be found in \cite[Lemma 5.4]{Marra}. However, one can also prove uniqueness in $L^2$ for system \eqref{Eq:MainModel} with similar arguments to Lemma \ref{Lem:T_Continuous} above. Indeed, we have
\begin{lemma}
    The solution pair $(w,v)$ given by Definition \ref{Def:AuxSol} is unique.
\end{lemma}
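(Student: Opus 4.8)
The plan is to repeat, for the difference of two solutions, the energy estimate already carried out in the proof of Lemma \ref{Lem:Cauchy}. The essential simplification is that Definition \ref{Def:AuxSol} fixes $B_1$ and $B_2$ as externally prescribed elements of $L^\infty(Q_T)$: two solutions with the same data thus share \emph{identical} drift coefficients, so the difference equations carry no term comparing two distinct coefficients. This is precisely what separates the present argument from the continuity estimate of Lemma \ref{Lem:T_Continuous}, where the coefficient itself varied and produced the extra term involving $\bar m$.

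Concretely, let $(w_1,v_1)$ and $(w_2,v_2)$ solve \eqref{Eq:AuxMain} in the sense of Definition \ref{Def:AuxSol} for the same $(w_0,v_0)$ and the same $(B_1,B_2)$, and put $\bar w := w_1 - w_2$, $\bar v := v_1 - v_2$. Subtracting the two weak formulations and decomposing the bilinear drift difference as
\[
 w_1 B_1\,(\nabla J*(w_1-v_1)) - w_2 B_1\,(\nabla J*(w_2-v_2)) = \bar w\, B_1\,(\nabla J*(w_1-v_1)) + w_2 B_1\,(\nabla J*(\bar w-\bar v)),
\]
together with the analogous identity for the $v$-equation, one tests with $(\psi,\eta)=(\bar w,\bar v)$. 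Integration by parts then places, for each component, a term $\tfrac{\diff}{\diff t}\|\bar w\|_2^2$ (resp.\ $\tfrac{\diff}{\diff t}\|\bar v\|_2^2$) and the dissipation $\|\nabla\bar w\|_2^2$ (resp.\ $\|\nabla\bar v\|_2^2$) on the left, with the two split drift contributions and the Lipschitz source difference $F(\tfrac12(w_1+v_1))-F(\tfrac12(w_2+v_2))$ on the right.

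Estimating the right-hand side follows the template of Lemma \ref{Lem:Cauchy} verbatim. For the first drift piece, Young's convolution inequality gives $\|B_1(\nabla J*(w_1-v_1))\|_\infty \leq \|B_1\|_\infty\|\nabla J\|_2(\|w_1\|_2+\|v_1\|_2)$, uniformly bounded in time by the energy estimates of Theorem \ref{Thrm:AuxExistence}; a weighted Young inequality bounds this term by $C\|\bar w\|_2^2 + \tfrac14\|\nabla\bar w\|_2^2$. For the second piece one writes $\|\nabla J*(\bar w-\bar v)\|_\infty \leq \|\nabla J\|_2(\|\bar w\|_2+\|\bar v\|_2)$ and absorbs the factor $w_2$ using $\|w_2\|_{L^\infty(0,T;L^2)}\leq C$, arriving at $C(\|\bar w\|_2^2+\|\bar v\|_2^2)+\tfrac14\|\nabla\bar w\|_2^2$; the Lipschitz continuity of $F$ controls the source difference by $C(\|\bar w\|_2^2+\|\bar v\|_2^2)$. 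Summing the $\bar w$- and $\bar v$-equations and absorbing the gradient terms yields the single-index differential inequality
\[
 \tfrac{\diff}{\diff t}\big(\|\bar w\|_2^2 + \|\bar v\|_2^2\big) \leq C_{\Vec{B},J,F,w_0,v_0}\big(\|\bar w\|_2^2 + \|\bar v\|_2^2\big),
\]
which is exactly \eqref{Eq:DiffEstimate} specialized to a single level. Since $\bar w(0)=\bar v(0)=0$, Gr\"onwall's inequality forces $\|\bar w\|_2^2+\|\bar v\|_2^2\equiv 0$ on $[0,T]$, proving uniqueness.

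The one delicate step is the bilinear term $w_2\,B_1\,(\nabla J*(\bar w-\bar v))$: it is here that the a priori regularity must be spent, converting the $L^\infty(0,T;L^2)$ bound on $w_2$ from Theorem \ref{Thrm:AuxExistence} and the square-integrability of $\nabla J$ into an $L^\infty$ bound on the convolution, so that the product pairs against $\|\nabla\bar w\|_2$ and is absorbed into the dissipation. Because both indices have collapsed to a single level, no restriction to a short time $t^*$ is required, in contrast to the Cauchy estimate of Lemma \ref{Lem:Cauchy}; the constant is time-independent and the conclusion holds on the full interval $[0,T]$.
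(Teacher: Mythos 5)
Your energy estimate is internally sound, but it proves a weaker statement than the one this lemma makes --- and one the paper has already recorded: uniqueness for the auxiliary system \eqref{Eq:AuxMain} with a fixed, externally prescribed pair $(B_1,B_2)\in L^\infty(Q_T)^2$ is exactly the content of the first remark following Lemma \ref{Lem:Cauchy}. The present lemma, however, sits in Section \ref{Sec:FixedPoint}, where solutions of \eqref{Eq:SumDiff_System} are by convention understood in the sense of Definition \ref{Def:AuxSol} \emph{with} $B_1:=\beta(m-1)$, $B_2:=\beta(m+1)$ and $m=\tfrac12(w-v)$; it asserts uniqueness for the nonlinear system \eqref{Eq:MainModel}, and it is immediately followed by ``This proves Theorem \ref{Thrm:MainThrm}.'' In that setting your central simplification --- that two solutions with the same data share identical drift coefficients --- is precisely what fails: each solution generates its own magnetization $m_i=\tfrac12(w_i-v_i)$ and hence its own coefficients, so the difference equations \emph{do} carry a term comparing two distinct coefficients, the very term you dismiss as the feature separating this lemma from Lemma \ref{Lem:T_Continuous}. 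Nor can your fixed-coefficient uniqueness be combined with the existence argument to recover the claim: it only makes $\mathcal{T}_1$ single-valued, and Schauder's theorem yields no uniqueness of fixed points of $\mathcal{T}$.

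The repair is the paper's actual proof: follow Lemma \ref{Lem:T_Continuous}, not Lemma \ref{Lem:Cauchy}. With $w:=w_1-w_2$, $v:=v_1-v_2$, $m:=m_1-m_2=\tfrac12(w-v)$, the drift difference in the $w$-equation decomposes into your two pieces plus the coefficient-difference piece $\int_\Omega \beta\, w_2\, m\,(\nabla J*(w_2-v_2))\cdot\nabla w\,\diff x$, estimated by $C_J\|m\|_2^2+\tfrac13\|\nabla w\|_2^2$; note this step spends the pointwise bound $0\le w_2\le 2$ of Remark \ref{Rem:wv_Bounded} (so uniqueness is obtained in the bounded class), since with only $\|w_2\|_{L^\infty(0,T;L^2(\Omega))}\le C$ the product $w_2\,m$ lies merely in $L^1(\Omega)$ and cannot be paired against $\nabla w\in L^2(\Omega)$. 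Summing over both equations reproduces \eqref{Ineq:Lemma32proof} with the extra $\|m\|_2^2$ on the right, and the identity $m=\tfrac12(w-v)$ gives $\|m\|_2^2\le\tfrac12\left(\|w\|_2^2+\|v\|_2^2\right)$, closing the inequality to $\frac{\diff}{\diff t}\left(\|w\|_2^2+\|v\|_2^2\right)\le C_{J,F}\left(\|w\|_2^2+\|v\|_2^2\right)$, whence Gr\"onwall's inequality forces $w\equiv v\equiv 0$. Your observation that no short-time restriction $t^*$ is needed does carry over intact to this corrected argument.
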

\begin{proof}
    Letting $w = w_1 - w_2$ and $v = v_1 - v_2$ be the difference of two solutions of \eqref{Eq:MainModel}, following the calculations in Lemma \ref{Lem:T_Continuous}, and using that $m = \tfrac{1}{2}(w-v)$, we have that inequality \eqref{Ineq:Lemma32proof} now reads
\[\frac{\diff}{\diff t}\|w \|_2^2 +\frac{\diff}{\diff t}\|v \|_2^2 \leq C_{J,F}( \|w\|_2^2 +\|v\|_2^2).\]
Gr\"onwall's inequality then proves the result.
\end{proof}

This proves Theorem \ref{Thrm:MainThrm}.

\section{A simple example of `from the top' evaporation}\label{Sec:Evap}

Looking at a thin film, modeling the evaporation of the solvent phase from a mixture of multiple phases interacting among themselves as well as with the solvent, is, in general, quite complicated. Consequently, accurate quantitative predictions are usually out of reach. However, in an experimentally controlled setup with not too many interacting phases and where the effect of temperature gradients can be neglected, valuable qualitative insights can be reached. For instance, in the context of applications to organic solar cells, the solvent evaporates into vapor and moves up and leaves the film. This process leaves us with two perspectives on the actual geometry of the thin film: from `the side' and from `the top'. For continuum models, the side perspective of the evaporation process can be modeled as a moving interface separating the solvent vapor and liquid phases as done, for instance, in \cite{cummings2018modeling}. This process tends to result in the formation of solvent `lanes' and can be observed both in continuum \cite{cummings2018modeling} models as well as in discrete lattice models \cite{Mario}. On the other hand, an observer of the top view of the film would see a bulk evaporation which is usually modeled as right-hand side source term of a balance law for solvent posed in two dimensions and not as a moving-boundary condition imposed an a moving surface within the thin film. Such situation is conceptually simpler and is easier to handle from modeling, mathematical analysis, and simulation points of view. In this section, we use a simple finite volume method (previously tested on model \eqref{Eq:MainModel_withoutEvap} in \cite{LyonsMunteanetal2023}) to demonstrate `from the top' evaporation and the effect it has on the morphologies. We leave the `from the side' and the 3 dimensional combination of the two perspectives for future work.

\subsection{Simulation results}\label{simulation}

To fix ideas, we take into account a linear evaporation model, i.e., $F:\mathbb{R}\to \mathbb{R}$ defined by $F(r) = \alpha(1-r)$ for all $r\in\mathbb{R}$ with given $\alpha>0$. We refer the reader to section 2.4.3.3 in \cite{AdrianContinuum} for a motivation of this particular structure that describes in a very simplified way the liquid-gas transition. More information on how to include the evaporation mechanism in models capturing phase separation in thin films can be found e.g. in \cite{Schaefer}. 

We use a finite volume scheme similar to that presented in \cite{LyonsMunteanetal2023} with slight modifications to include the production term by evaporation. Namely, for fixed mesh sizes $\Delta t, \Delta x, \Delta y > 0$ we discretize the  domain $\Omega$ by cells $\Lambda_{i,j} := [x_i -\frac12 h, x_i +\frac12 h) \times [y_j -\frac12 h, y_j +\frac12 h) $ for $i,j = 1,2,\dots, N$. To account for the periodic boundary conditions, we periodically extend functions defined on the nodes, i.e., $f_{i,j} = f_{i\pm N,j} = f_{i, j\pm N}$. The initial pair $(m_0, \phi_0)$ is then approximated by
\[ m^0_{i,j} := \frac{1}{\vert \Lambda_{i,j}\vert}\int_{\Lambda_{i,j}} m_0(x,y) \diff x\diff y\quad \text{ and } \quad \phi^0_{i,j} :=\frac{1}{\vert \Lambda_{i,j}\vert}\int_{\Lambda_{i,j}} \phi_0(x,y) \diff x\diff y \] and define the fully explicit scheme:

\begin{equation}\label{Eq_Scheme1}
\left \lbrace
\begin{split}
    \frac{m^{k+1}_{i,j}-m^k_{i,j}}{\Delta t} &= \frac{1}{\Delta x^2}D^2_i[m^k_{i,j}] +\frac{1}{\Delta y^2}D^2_j[m^k_{i,j}]  \nonumber\\
    &- \frac{\beta}{\Delta x} D^1_i[(\phi^k_{i,j} -(m^k_{i,j})^2) \Tilde{J}^k_{x,i,j}] - \frac{\beta}{\Delta y} D^1_j[(\phi^k_{i,j} -(m^k_{i,j})^2) \Tilde{J}^k_{y,i,j} ] \\
    \frac{\phi^{k+1}_{i,j}- \phi^k_{i,j}}{\Delta t} &= \frac{1}{\Delta x^2}D^2_i[\phi^k_{i,j}] +\frac{1}{\Delta y^2}D^2_j[\phi^k_{i,j}]  \nonumber\\
    &- \frac{\beta}{\Delta x} D^1_i[m^k_{i,j}(1-\phi^k_{i,j}) \Tilde{J}^k_{x,i,j}] - \frac{\beta}{\Delta y} D^1_j[m^k_{i,j}(1-\phi^k_{i,j}) \Tilde{J}^k_{y,i,j} ] + F(\phi_{i,j}^k)
\end{split} \right. ,
\end{equation}
where
\[D^2_l[f_l] := f_{l+1}-2f_{l} +f_{l-1}, \quad D^1_l[f_l] := f_{l+1} - f_{l-1},   \]
and $\Tilde{J}^k_{x,i,j}$ (respectively, $\Tilde{J}^k_{y,i,j}$) denotes the approximation of $\p_x J * m^k (x_i,y_j)$ (respectively, $\p_y J * m^k (x_i,y_j)$) using a fast Fourier transform method similar to \cite{TiwariKumaretal_2021_FastAccurateApproximation}. We remark that similar estimates to those found in Section \ref{Sec:AuxProblem} may be repurposed to discuss convergence of the finite volume scheme. However, we leave this to be rigorously shown in a future manuscript. A numerical demonstration of convergence for the scheme with $F(\phi) \equiv 0 $ can be found in \cite{LyonsMunteanetal2023}.

As in \cite{LyonsMunteanetal2023}, we observe morphology formation during the simulation. However, whereas in the previous results the shape-type (e.g.,`ball-like' or `bi-continuous') of morphologies stays consistent throughout the simulation, we observe a change in shape-type while there is an evaporation component. This phenomenon is demonstrated in Figure \ref{fig:M_Phi_Evap} where `ball-like' structures can be seen at early times ($t=1$) and they slowly grow as solvent evaporates into long continuous structures at later times. A similar situation pointing out a competition between phase separation and evaporation is reported in  \cite{Benoit_2021}. Here the shapes of the morphologies formed in a partially miscible mixture change during the evaporation process, while one notes that the spinodal instability occurs when the evaporation is fast.

\begin{figure}[h]
    \centering
    \includegraphics[scale = 0.6]{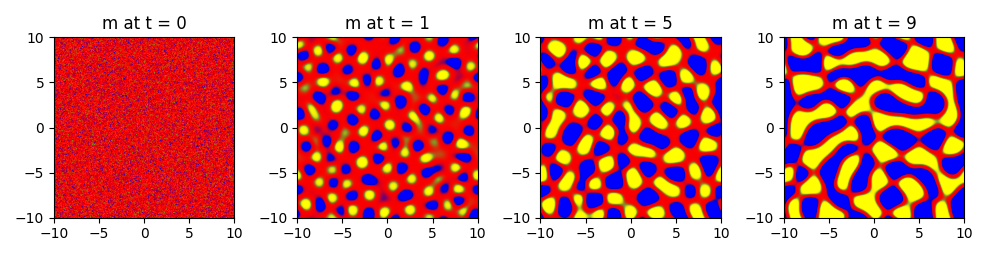}
    \includegraphics[scale = 0.6]{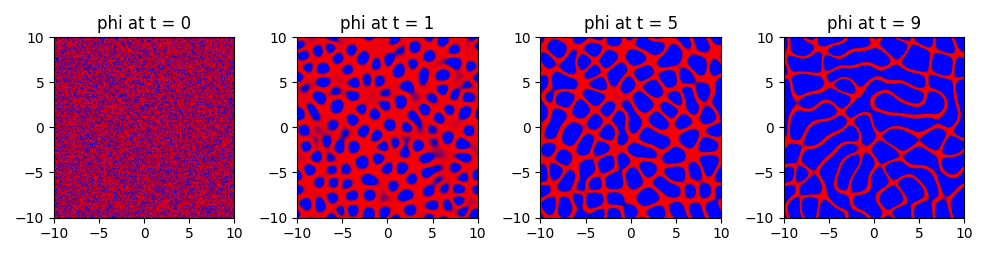}
    \caption{Simulation of equation \eqref{Eq:MainModel} with $\alpha = 0.1$, $\beta = 10$, and initial solvent ratio of 80\%. Regions where $m$ is positive are colored blue, negative regions are colored yellow, and regions $m$ is near zero are colored red. Similarly, regions where $\phi$ is near one are colored blue whereas regions near zero are colored red. }
    \label{fig:M_Phi_Evap}
\end{figure}

In Figure \ref{fig:Ratio_Calculations}, we present multiple measurements of the simulation. First, we present the ratio of solvent during the simulation over time given by $\|1-\phi \|_1$. As expected, we observe a decrease in solvent ratio over time. Next, we plot the $\| \cdot \|_1$ norms of both $m$ and $\phi$ (scaled by the norm of the initial condition) over time. The observed behavior is an increase in total volume of the solute. This is reminiscent of the Monte Carlo simulation of somewhat related microscopic models (see \cite{Andrea_PhysRevE,Andrea_EPJ}), where the evaporating solvent was replaced by solute particles. We note that the sudden drop in the $L^1$ norm of $m$ is due to the initial mixing of the solution (ternary mixture). In other words, this drop happens during the time period where morphologies are not yet well formed. In the aforementioned cited papers, the evaporating solvent was replaced by solute in such a way that the ratio of $+1$ and $-1$ particles was kept constant. We measure this effect in the third plot of Figure \ref{fig:Ratio_Calculations} where we plot the evolution of the ratio of $\|m^+\|_1$ and $\|m^-\|_1$.  Here, we observe an approximate 4\% variance of this ratio due to the numerical scheme. However, it appears to self correct over time. 

\begin{figure}[h]
    \centering
    \includegraphics[scale = 0.4]{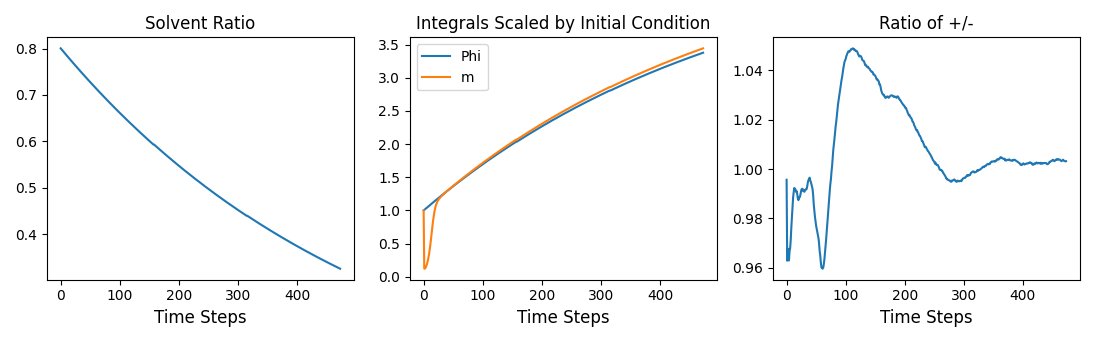}
    \caption{Solvent ratio, $L^1$ norms, and solute ratio of the simulation shown in Figure \ref{fig:M_Phi_Evap} plotted over time. The horizontal axis here represents the time step in the numerical scheme in thousands.}
    \label{fig:Ratio_Calculations}
\end{figure}

\section{Discussion and outlook}\label{Sec:Discussion}

Summarizing our work here, we have shown the existence of a unique bounded solution to system \eqref{Eq:MainModel} with standard parabolic regularity for some finite time. Moreover, the solutions to this system conserve the physical inequality \eqref{Ineq:MPhi}. We have included an example of how to adjust system \eqref{Eq:MainModel} to include a `from the top' evaporation process and included simulations of a simple example of this process. While the particular example of evaporation is simple, clear transitions between the patterns studied in \cite{LyonsMunteanetal2023} can be observed in Figure \ref{fig:M_Phi_Evap}.

The study of such type of phase separation models is rich with questions and it remains to be seen how the analysis of similar models such as the Cahn--Hilliard and Allen--Cahn equations compare to system \eqref{Eq:MainModel}. For instance, the study of the sharp interface via formal asymptotic expansion as done in \cite{giacomin1998phase,Pego,HennessyWagner2015,barua2023sharp} could lead to more insight on how the morphologies grow in time. One could add further complications to this study by applying the asymptotic expansion to system \eqref{Eq:MainModel} including the evaporation process. Another question relating to the analysis of the model is with respect to the strict separation property studied in \cite{gal2017nonlocal,gal2014longtime,gal2022separation}. Numerical simulations suggest that $||m| - \phi| \longrightarrow 0$ almost everywhere as $t \longrightarrow \infty$ for initial conditions satisfying assumption \eqref{Assump:MPhi}. This property has the physical meaning that the mixture asymptotically becomes well separated and has yet to be rigorously shown.  

Our main interest is to use the morphologies generated by system \eqref{Eq:MainModel} as a type of porous domain to model and simulate charge transport under some accepted physical assumptions \cite{Jenny,Khoa}. Such simulations can allow a quantitative study of the effectiveness of the patterns of observed morphologies. With this in mind, more simulations of system \eqref{Eq:MainModel} in three dimensions taking both perspectives of evaporation into account are crucial to best representing the physical system. Numerical schemes taking into account the gradient flow structure of the model presented in \cite{Marra} similar to those studied in \cite{Guan_Wang_Wise_2014,Guan_Lowengrub_Wang_Wise_2014} seem readily adaptable to system \eqref{Eq:MainModel} without evaporation. As future work, we would like to study ways to include the evaporation process inside of these schemes.    

\section*{Acknowledgments} The authors are grateful to the anonymous reviewers for their comments which significantly contributed to the quality of the manuscript. The authors also thank M. Eden (Karlstad, Sweden) for fruitful discussions on the mathematical analysis of the auxiliary problem and  S. A. Muntean (Karlstad) for brainstorming about the modeling of the evaporation production term. RL and AM acknowledge the financial support of Carl Tryggers Stieftelse via the grant CTS 21:1656.   
\bibliographystyle{plain}
\bibliography{Bib}

\begin{thebibliography}{10}

\bibitem{amann2011ordinary}
H.~Amann.
\newblock {\em Ordinary differential equations: an introduction to nonlinear
  analysis}, volume~13.
\newblock Walter de gruyter, 1990.

\bibitem{barua2023sharp}
A.~K. Barua, R.~Chew, S.~Li, J.~Lowengrub, A.~M{\"u}nch, and B.~Wagner.
\newblock Sharp-interface problem of the {O}hta-{K}awasaki model for symmetric
  diblock copolymers.
\newblock {\em Journal of Computational Physics}, 2023.

\bibitem{carrillo2014derivation}
J.~A. Carrillo, Y.-P. Choi, and M.~Hauray.
\newblock The derivation of swarming models: mean-field limit and {W}asserstein
  distances.
\newblock In A.~Muntean and F.~Toschi, editors, {\em Collective Dynamics from
  Bacteria to Crowds}, pages 1--46. CISM Series, 2014.

\bibitem{carrillo2020long}
J.~A. Carrillo, R.~S. Gvalani, G.~A. Pavliotis, and A.~Schlichting.
\newblock Long-time behaviour and phase transitions for the {McKean}--{V}lasov
  equation on the torus.
\newblock {\em Archive for Rational Mechanics and Analysis}, 235(1):635--690,
  2020.

\bibitem{CHAZELLE2017365}
B.~Chazelle, Q.~Jiu, Q.~Li, and C.~Wang.
\newblock Well-posedness of the limiting equation of a noisy consensus model in
  opinion dynamics.
\newblock {\em Journal of Differential Equations}, 263(1):365--397, 2017.

\bibitem{Andrea_EPJ}
E.~N.~M. Cirillo, M.~Colangeli, E.~Moons, A.~Muntean, S.~A. Muntean, and
  J.~{van Stam}.
\newblock A lattice model approach to the morphology formation from ternary
  mixtures during the evaporation of one component.
\newblock {\em Eur. Phys. J. Spec. Top.}, 228:55--68, 2019.

\bibitem{creton2016rubber}
C.~Creton and M.~Ciccotti.
\newblock Fracture and adhesion of soft materials.
\newblock {\em Reports on Progress in Physics}, 79(4):046601, 2016.

\bibitem{cummings2018modeling}
J.~Cummings, J.~S. Lowengrub, B.~G. Sumpter, S.~M. Wise, and R.~Kumar.
\newblock Modeling solvent evaporation during thin film formation in phase
  separating polymer mixtures.
\newblock {\em Soft Matter}, 14(10):1833--1846, 2018.

\bibitem{eden2022multiscale}
M.~Eden, C.~Nikolopoulos, and A.~Muntean.
\newblock A multiscale quasilinear system for colloids deposition in porous
  media: {W}eak solvability and numerical simulation of a near-clogging
  scenario.
\newblock {\em Nonlinear Analysis: Real World Applications}, 63:103408, 2022.

\bibitem{Evans_pde}
L.~C. Evans.
\newblock {\em Partial {D}ifferential {E}quations}, volume~19.
\newblock American Mathematical Society, $2^{nd}$ edition, 2010.

\bibitem{gal2017nonlocal}
C.~G. Gal, A.~Giorgini, and M.~Grasselli.
\newblock The nonlocal {C}ahn--{H}illiard equation with singular potential:
  well-posedness, regularity and strict separation property.
\newblock {\em Journal of Differential Equations}, 263(9):5253--5297, 2017.

\bibitem{gal2022separation}
C.~G. Gal, A.~Giorgini, and M.~Grasselli.
\newblock The separation property for 2{D} {C}ahn-{H}illiard equations: local,
  nonlocal and fractional energy cases.
\newblock {\em Discrete and Continuous Dynamical Systems}, 10, 2022.

\bibitem{gal2014longtime}
C.~G. Gal and M.~Grasselli.
\newblock Longtime behavior of nonlocal {C}ahn-{H}illiard equations.
\newblock {\em Discrete \& Continuous Dynamical Systems}, 34(1):145, 2014.

\bibitem{Giacomin}
G.~Giacomin and J.~L. Lebowitz.
\newblock Phase segregation dynamics in particle systems with long range
  interaction {I}. {M}acroscopic limits.
\newblock {\em J. Statist. Phys.}, 87:37--61, 1997.

\bibitem{giacomin1998phase}
G.~Giacomin and J.~L. Lebowitz.
\newblock Phase segregation dynamics in particle systems with long range
  interactions {II}: {I}nterface motion.
\newblock {\em SIAM Journal on Applied Mathematics}, 58(6):1707--1729, 1998.

\bibitem{Guan_Lowengrub_Wang_Wise_2014}
Z.~Guan, J.~S. Lowengrub, C.~Wang, and S.~M. Wise.
\newblock Second order convex splitting schemes for periodic nonlocal
  {C}ahn–{H}illiard and {A}llen–{C}ahn equations.
\newblock {\em Journal of Computational Physics}, 277:48–71, Nov 2014.

\bibitem{Guan_Wang_Wise_2014}
Z.~Guan, C.~Wang, and S.~M. Wise.
\newblock A convergent convex splitting scheme for the periodic nonlocal
  {C}ahn-{H}illiard equation.
\newblock {\em Numerische Mathematik}, 128(2):377–406, Oct 2014.

\bibitem{HennessyWagner2015}
M.~G. Hennessy, V.~M. Burlakov, A.~Goriely, B.~Wagner, and A.~M\"{u}nch.
\newblock Controlled topological transitions in thin-film phase separation.
\newblock {\em SIAM Journal on Applied Mathematics}, 75(1):38--60, 2015.

\bibitem{hoppe2004organic}
H.~Hoppe and N.~S. Sariciftci.
\newblock Organic solar cells: {A}n overview.
\newblock {\em Journal of Materials Research}, 19(7):1924--1945, 2004.

\bibitem{Khoa}
V.~A. Khoa and A.~Muntean.
\newblock Corrector homogenization estimates for a non-stationary
  {S}tokes-{N}ernst-{P}lanck-{P}oisson system in perforated domains.
\newblock {\em Communications in Mathematical Sciences}, 17(3):705--738, 2019.

\bibitem{lasry2007mean}
J.-M. Lasry and P.-L. Lions.
\newblock Mean field games.
\newblock {\em Japanese Journal of Mathematics}, 2(1):229--260, 2007.

\bibitem{liu2021long}
W.~Liu, L.~Wu, and C.~Zhang.
\newblock Long-time behaviors of mean-field interacting particle systems
  related to {McKean}--{V}lasov equations.
\newblock {\em Communications in Mathematical Physics}, 387(1):179--214, 2021.

\bibitem{LyonsMunteanetal2023}
R.~Lyons, S.~A. Muntean, E.~N.~M. Cirillo, and A.~Muntean.
\newblock A continuum model for morphology formation from interacting ternary
  mixtures: Simulation study of the formation and growth of patterns.
\newblock {\em Physica D: Nonlinear Phenomena}, 453:133832, 2023.

\bibitem{Marra}
R.~Marra and M.~Mourragui.
\newblock Phase segregation dynamics for the {B}lume–{C}apel model with {K}ac
  interaction.
\newblock {\em Stochastic Processes and their Applications}, 88(1):79--124,
  2000.

\bibitem{Miranville}
A.~Miranville.
\newblock {\em The {C}ahn-{H}illiard {E}quation: {R}ecent {A}dvances and
  {A}pplications}.
\newblock CBMS-NSF Regional Conference Series in Applied Mathematics. SIAM,
  2019.

\bibitem{Muller_etal2022}
M.~M\"{u}ller, A.~Lang, M.~Kl\"{u}ppel, and U.~Giese.
\newblock Influence of phase morphology on viscoelastic properties of rubber
  blends.
\newblock In C.~Marano, F.~B. Vangosa, L.~Andena, and R.~Frassine, editors,
  {\em Constitutive Models for Rubber XII: Proceedings of the 12th European
  Conference on Constitutive Models for Rubber (ECCMR 2022)}. CRC Press, 2022.

\bibitem{AdrianContinuum}
A.~Muntean.
\newblock {\em Continuum {M}odeling: {A}n {A}pproach {T}hrough {P}ractical
  {E}xamples}.
\newblock SpringerBriefs in Mathematical Methods. Springer International
  Publishing, 2015.

\bibitem{Andrea_PhysRevE}
S.~A. Muntean, V.~C.~E. Kronberg, M.~Colangeli, A.~Muntean, J.~{van Stam},
  E.~Moons, and E.~N.~M. Cirillo.
\newblock Quantitative analysis of phase formation and growth in ternary
  mixtures upon evaporation of one component.
\newblock {\em Phys. Rev. E}, 106:025306, 2022.

\bibitem{Jenny}
J.~Nelson, J.~J. Kwiatkowski, J.~Kirkpatrick, and J.~M. Frost.
\newblock Modeling charge transport in organic photovoltaic materials.
\newblock {\em Acc. Chem. Res.}, 42(11):1768--1778, 2009.

\bibitem{Pego}
R.~L. Pego.
\newblock Front migration in the nonlinear {C}ahn-{H}illiard equation.
\newblock {\em Proceedings of the Royal Society of London. Series A,
  Mathematical and Physical Sciences}, 422(1863):261--278, 1989.

\bibitem{Presutti}
E.~Presutti.
\newblock {\em Scaling {L}imits in {S}tatistical {M}echanics and
  {M}icrostructures in {C}ontinuum {M}echanics}.
\newblock Theoretical and Mathematical Physics. Springer, 2008.

\bibitem{Benoit_2021}
R.~Rabani, H.~Sadafi, H.~Machrafi, M.~Abbasi, B.~Haut, and P.~Dauby.
\newblock Influence of evaporation on the morphology of a thin film of a
  partially miscible binary mixture.
\newblock {\em Colloids and Surfaces A: Physicochemical and Engineering
  Aspects}, 612:126001, 2021.

\bibitem{Schaefer}
C.~Schaefer.
\newblock {\em Theory of nanostructuring in solvent-deposited thin polymer
  films}.
\newblock PhD thesis, Technische Universiteit Eindhoven, 2016.

\bibitem{Mario}
M.~Setta, V.~C.~E. Kronberg, S.~A. Muntean, E.~Moons, J.~{van Stam}, E.~N.~M.
  Cirillo, M.~Colangeli, and A.~Muntean.
\newblock A mesoscopic lattice model for morphology formation in ternary
  mixtures with evaporation.
\newblock {\em Communications in Nonlinear Science and Numerical Simulation},
  119:107083, 2023.

\bibitem{TiwariKumaretal_2021_FastAccurateApproximation}
A.~K. Tiwari, A.~Pandey, J.~Paul, and A.~Anand.
\newblock Fast accurate approximation of convolutions with weakly singular
  kernel and its applications, 2021.
\newblock (arXiv.2107.03958).

\bibitem{Vera_2017}
J.~M.~R. Vera.
\newblock A convergent iterative method for a logistic chemotactic system.
\newblock {\em Revista Colombiana de Matemáticas}, 51(1):103--117, 2017.

\end{thebibliography}

\end{document}